\newtheorem{theorem}{Theorem}
\newtheorem{lemma}[theorem]{Lemma}
\theoremstyle{definition}
\theoremstyle{remark}
\newtheorem{remark}{Remark}[section]
\begin{document}
\title{Virtual Covers of Links II}
\author{Micah Chrisman}
\author{Aaron Kaestner}
\begin{abstract} A fibered concordance of knots, introduced by Harer, is a concordance between fibered knots that is well-behaved with respect to the fibrations. We consider semi-fibered concordance of two component ordered links $L=J \sqcup K$ with $J$ fibered. These are concordances that restrict to fibered concordances on the first component. Motivated by some examples of Gompf-Scharlemann-Thompson, we further limit our attention to those links $L$ where $K$ is ``close to'' a fiber of $J$. Such $L$ are studied with virtual covers, where a virtual knot $\upsilon$ is associated to $L$. We show that the concordance class of $\upsilon$ is a semi-fibered concordance invariant. This gives obstructions for certain slice and ribbon discs for the $K$ component. Further applications are to injectivity of satellite operators in semi-fibered concordance and to knots in fibered $3$-manifolds. 
\end{abstract}
\keywords{virtual knots, fibered concordance, semi-fibered concordance, satellite operator}
\subjclass[2000]{57M25, 57M27}
\maketitle
In \cite{gst}, Gompf-Scharlemann-Thompson described an infinite family of two component slice links $L_n=J_0 \sqcup V_n$ that are unknown to be ribbon. The knot $J_0$ is a square knot and hence is fibered. Since $L_n$ is concordant to the two component unlink and the unknot is fibered, it is natural to investigate concordances between $J_0$ and the unknot that behave nicely with respect to the fibrations. Such a notion of \emph{fibered concordance} of knots was introduced by Harer \cite{harer}. Thus we consider \emph{semi-fibered concordance} of two component links with first component fibered: concordances restricting to a fibered concordance of the first component.  
\newline
\newline
Scharlemann \cite{scharlemann} later showed how to arrange $V_n$ as a simple closed curve on a fiber of $J_0$. Such links are examples of links in \emph{special Seifert form} (SSF) introduced in \cite{cm_fiber}. These are two component links $L=J \sqcup K$ with $J$ fibered and $K$ lying ``close to'' a fiber (see Section \ref{sec_virt_cov}). Such links may be studied with \emph{virtual covers}, where a virtual knot $\upsilon$ is associated to $L$ so that it functions essentially as an invariant of $L$. In \cite{vc_1} it was shown that $\upsilon$ can detect geometric properties of $L$, such as if $L$ is non-split or non-invertible.
\newline
\newline
These observations provide the setting for this sequel to \cite{vc_1}. The main theorem is that the concordance class of $\upsilon$ as a virtual knot is a semi-fibered concordance invariant of two component links $L$ in SSF (see Section \ref{sec_semi_fib}). An obstruction to the existence of certain kinds of slice and ribbon discs is obtained (see Section \ref{sec_sli_rib}).  We give an example where it is more discriminating than Cochran's $\beta$ invariant \cite{cochran_geom}. The obstruction vanishes for $L_n$ from \cite{gst}.
\newline
\newline
Our second application is to the injectivity of satellite operators. Section \ref{sec_sat} gives a combinatorial condition on $\upsilon$ under which the untwisted satellite operator $J \sqcup K \to J \sqcup P(K)$ acts injectively in semi-fibered concordance, where $P$ is a pattern of non-zero winding number and $J \sqcup K$ is in SSF. Virtual covers can also be applied to knots in closed fibered $3$-manifolds. Applications are sketched in Section \ref{sec_three}.  Section \ref{sec_back} contains a review of virtual covers, virtual knot concordance, and concordance invariants of virtual knots.

\section{Background}\label{sec_back}

\subsection{Links and Virtual Knots} We will assume the reader has some familiarity with virtual knot theory. Recall that there are four models for virtual knots: (1) virtual knot diagrams modulo extended Reidemeister moves \cite{KaV}, (2) Gauss diagrams modulo diagrammatic versions of the Reidemeister moves \cite{GPV}, (3) abstract link diagrams modulo Kamada-Kamada equivalence \cite{kamkam}, (4) knots in thickened surfaces modulo stabilization/destabilization and stable diffeomorphism \cite{kuperberg}. Given a representative $R$ from any of these four models of virtual knots, we will denote by $\kappa(R)$ the equivalence class of virtual knot diagrams from model (1). Equivalence of (ordered, oriented) links in $\mathbb{S}^3$ and oriented virtual knots is denoted by ``$\leftrightharpoons$''.

\subsection{Concordance of Knots and Virtual Knots} We work throughout in the smooth category. Two oriented knots $K_0$ and $K_1$ in $\mathbb{S}^3$ are said to be concordant in $\mathbb{S}^3$ if there is an embedded oriented annulus $A$ in $\mathbb{S}^3 \times \mathbb{I}$ such that $A\cap \mathbb{S}^3 \times \{i\}=(-1)^i K_i$, where $-K$ denotes a change of orientation. If $K_0$ and $K_1$ are concordant, we write $K_0 \asymp K_1$. A concordance can be realized combinatorially as a sequence of births $b$ (local minimums), deaths $d$ (local maximums), and saddle moves $s$ such that $\# b-\#s+\#d=0$.

\begin{figure}[htb]
\begin{tabular}{|c|c|}  \hline
\begin{tabular}{c} \\
$K \sqcup \begin{array}{c} \def\svgwidth{.35in}
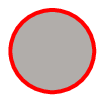 \end{array} $\\ \\ \ $\text{birth} \uparrow \,\,\,\, \downarrow \text{death}$ \\ \\
 $K$
\end{tabular}
& 
\begin{tabular}{ccc}
\multicolumn{3}{c}{\underline{saddle move}} \\ & &\\
\begin{tabular}{c}
\def\svgwidth{1in}
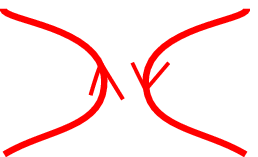 
\end{tabular}
& $\leftrightarrow$ &
\begin{tabular}{c}
\def\svgwidth{1in}
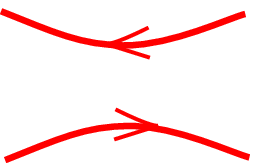 
\end{tabular}  \end{tabular} \\ \hline
\end{tabular} 
\caption{Births, Deaths, and Saddles.}\label{fig_concordance}
\end{figure}

Let $\Sigma$ be a compact connected oriented (c.c.o.) surface.  For knots in $\Sigma \times \mathbb{R}$, we have notion of concordance due to Turaev \cite{turaev_cobordism}. We denote a knot $K$ in a $3$-manifold $N$ by $K^N$. For $i=0,1$, let $\mathfrak{k}_i$ be knots in $\Sigma_i \times \mathbb{R}$. Then $\mathfrak{k}_0^{\Sigma_0 \times \mathbb{R}}, \mathfrak{k}_1^{\Sigma_1 \times \mathbb{R}}$ are \emph{concordant} if there is a c.c.o. $3$-manifold $M$, an embedding of the surface $\Sigma_0 \sqcup -\Sigma_1 \to \partial M$, and a properly embedded oriented annulus $\mathfrak{a}:\mathbb{S}^1 \times \mathbb{I} \to M \times \mathbb{R}$ such that for $i=0,1$, $ \mathfrak{a}\cap (\Sigma_i \times \mathbb{R})=(-1)^i\mathfrak{k}_i^{\Sigma_i \times \mathbb{R}}$.  Again we denote concordant knots in thickened surfaces by $\mathfrak{k}_0^{\Sigma_0 \times \mathbb{R}} \asymp \mathfrak{k}_1^{\Sigma_1 \times \mathbb{R}}$. 
\newline
\newline
Two oriented virtual knots $\upsilon_0$, $\upsilon_1$ are \emph{concordant} if they are obtained from one another by a finite sequence of extended Reidemeister moves, births $b$, deaths $d$, and saddle moves $s$ satisfying $\#b-\#s+\#d=0$. This combinatorial definition, introduced in \cite{DKK}, was shown to be equivalent to a geometric formulation in \cite{CKS}: the concordance relation for virtual knots is equivalent to concordance relation of knots in thickened surfaces together with stabilization/destabilization (see \cite{CKS}, Lemma 4.6).

\subsection{Concordance Invariants of Virtual Knots} We will use two invariants to separate concordance classes of virtual knots: the Henrich-Turaev (HT) polynomial and the slice genus. Let $\upsilon$ be an oriented virtual knot diagram. Let $G$ be a Gauss diagram for $\upsilon$ and $x$ an arrow of $G$. Let $\bowtie(G)$ be the set of arrows of $G$. Suppose that $a\in\bowtie(G)\backslash\{x\}$ intersects $x$. Define $\text{int}_x(a)=\pm 1$ according to Figure \ref{fig_int_defn}. Then define:
\[
\text{index}(x)=\sum_a \text{sign}(a) \text{int}_x(a), \,\,\,
w_{\upsilon}(t)=\sum_{\stackrel{x \in \bowtie (G)}{\text{index}(x) \ne 0}} \text{sign}(x) t^{|\text{index}(x)|},
\]
where $\text{sign}(a)=\pm 1$ denotes the local writhe of the crossing of $a$. The polynomial $w_{\upsilon}(t)$ is called the \emph{HT}  \footnote{This definition of the HT polynomial is slightly different from the definition in \cite{henrich, turaev_cobordism}}  \emph{polynomial} \cite{henrich, turaev_cobordism}.

\begin{figure}[htb]
\begin{tabular}{|cc|c|} \hline & &  \\
\begin{tabular}{c}
\def\svgwidth{.75in}
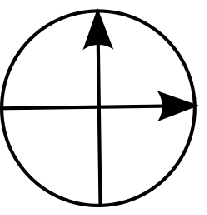 \end{tabular} & \begin{tabular}{c} \def\svgwidth{.8in}
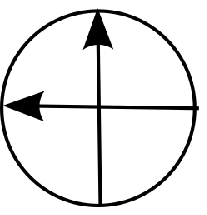 \end{tabular} & \begin{tabular}{c}\def\svgwidth{1.5in}
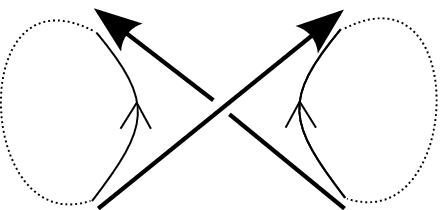 \end{tabular} \\ \hline
\end{tabular}
\caption{(Left) Definition of $\text{int}_x(a)$ and (Right) distinguished halves.} \label{fig_int_defn}
\end{figure}

\begin{theorem} \label{thm_ht_invar} The HT polynomial is a concordance invariant of virtual knots.
\end{theorem}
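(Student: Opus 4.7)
The plan is to verify that $w_\upsilon(t)$ is unchanged under each elementary move generating concordance: the extended Reidemeister moves together with births, deaths, and saddles constrained by $\#b-\#s+\#d=0$. Since births and saddles can produce intermediate virtual link diagrams, I would first extend the HT polynomial to multi-component diagrams by applying the same Gauss-diagrammatic formula, summing in $\text{index}(x)$ only over arrows $a$ whose endpoints lie on the same component as $x$ in the underlying immersed $1$-manifold.

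Invariance under extended Reidemeister moves is the classical verification and is already implicit in \cite{henrich,turaev_cobordism}. Briefly: the chord inserted by R1 meets no other chord and has $\text{index}=0$, so it contributes nothing to $w_\upsilon$; the two chords inserted by R2 carry opposite signs and identical pairwise intersection data with every other chord, so their contributions cancel; R3 and the detour moves preserve chord signs and pairwise signed intersection data up to a direct check. A birth adjoins a disjoint unknotted component, introducing no new chords and altering no intersection data, so $w_\upsilon$ is literally unchanged, and a death is symmetric.

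The substantive step is the saddle move. Locally a saddle either merges two components of the underlying curve into one or splits one component into two, while introducing no new chords and preserving the signs and pairwise intersection pattern of existing chords. For a chord $x$ whose two endpoints remain on the same component before and after the saddle, $\text{index}(x)$ is unchanged because the cyclic order of intersecting chord-endpoints around $x$'s component is preserved. The subtle case, and the main obstacle, is a chord whose endpoints move from being co-componental to lying on distinct components or vice versa, since then its contribution to $w_\upsilon$ jumps between zero and a potentially nonzero value. My plan is to handle this either by a direct case analysis of the local saddle configurations on Gauss diagrams, or more cleanly by appealing to the geometric realization of virtual knot concordance via knots in thickened surfaces (Lemma 4.6 of \cite{CKS}), where $\text{sign}(x)\cdot\text{index}(x)$ reinterprets as intersection data of homology classes in the surface that are preserved under the cobordism, making the invariance of $w_\upsilon$ evident.
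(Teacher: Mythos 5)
You correctly isolate the saddle move as the crux, but you leave it unresolved, and your primary, move-by-move strategy cannot succeed as stated. If $w_{\upsilon}$ were preserved by each individual extended Reidemeister move, birth, death, and saddle, it would be a \emph{cobordism} invariant; since every virtual knot is cobordant to the unknot, $w_{\upsilon}$ would then vanish identically, which it does not (e.g.\ $w_{\upsilon_k}(t)=2t\sum_i|p_iq_i|$ in Theorem \ref{thm_ht_nontriv}). So a single saddle genuinely changes the index data: beyond the case you flag, merging two circles makes chords from the other component co-componental with a surviving chord $x$ and can create new linkings, and splitting a circle removes linkings, so $\text{index}(x)$ is not locally controlled. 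Any correct proof must therefore invoke the global constraint $\#b-\#s+\#d=0$ (equivalently, that the cobordism is an annulus), and your proposal never identifies where that hypothesis enters; asserting that saddles ``preserve the pairwise intersection pattern'' is precisely the point that fails.

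Your fallback --- pass to knots in thickened surfaces and read $\text{sign}(x)\,\text{index}(x)$ homologically --- is the right idea and is essentially the paper's route, but as written it is a gesture rather than an argument: homology classes in $\Sigma_0$ are not literally ``preserved'' by a cobordism into a $3$-manifold $M$ with $\Sigma_0\sqcup-\Sigma_1\hookrightarrow\partial M$. What the paper actually does is show that for a diagram $D$ of $\mathfrak{k}_i$ on $\Sigma_i$ one has $|\text{index}(x)|=|D_x\cdot D|$, where $D_x$ is one of the two curves of the oriented smoothing at $x$ (using $[D]=[D_x]+[D_x']$ in $H_1(\Sigma_i;\mathbb{Z})$), whence $w_{\kappa(\mathfrak{k}_i)}(t)=u_+(\mathfrak{k}_i)(t)+u_-(\mathfrak{k}_i)(t)$ for Turaev's concordance invariants $u_{\pm}$ of knots in thickened surfaces; invariance then follows from Turaev's theorem (which is where the annulus condition does the work) together with the equivalence of combinatorial virtual concordance with surface concordance plus stabilization (\cite{CKS}, Lemma 4.6). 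To complete your proof you would need either to carry out this identification with $u_{\pm}$ precisely, or to supply the nontrivial global bookkeeping that makes the saddle-by-saddle changes cancel over a genus-zero cobordism; neither is present.
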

\begin{proof} Suppose that $\mathfrak{k}_0^{\Sigma_0 \times \mathbb{R}} \asymp \mathfrak{k}_1^{\Sigma_1 \times \mathbb{R}}$, where each $\mathfrak{k}_i$ has a regular projection to $\Sigma_i$. Then we may determine $\kappa(\mathfrak{k}_i)$ by finding a Gauss diagram $G_i$ for $\mathfrak{k}_i$ on $\Sigma_i$ in the usual way. If $x \in \bowtie (G_i)$, $|\text{index}(x)|$ may be computed by performing the oriented smoothing at $x$ (see Figure \ref{fig_int_defn}) and taking the algebraic intersection number of the resulting two curves $D_x$, $D_x'$ on $\Sigma$ (for details, see \cite{mwc_had}). If $D$ is the immersed curve of the diagram of $\mathfrak{k}_i$ in $\Sigma_i$, then $[D]=[D_x]+[D_x']$ in $H_1(\Sigma;\mathbb{Z})$. Hence $|D\cdot D_x|=|D_x \cdot D_x'|$. This implies that:
\[
w_{\kappa(\mathfrak{k}_i)}(t)=\sum_{x \in \bowtie (G_i), D \cdot D_x \ne 0} \text{sign}(x) t^{|D_x \cdot D|}.
\]
In \cite{turaev_cobordism}, Turaev defines two concordance invariants  $u_+,u_-$ of knots in thickened surfaces. The above formula shows that $w_{\kappa(\mathfrak{k}_i)}(t)=u_+(\mathfrak{k}_i)(t)+u_-(\mathfrak{k}_i)(t)$. Hence, $w_{\kappa(\mathfrak{k}_0)}(t)=w_{\kappa(\mathfrak{k}_1)}(t)$. 
\end{proof}

If the condition that $\#b-\#s+\#d=0$ is removed from virtual knot concordance, then we have \emph{virtual knot cobordism}. The \emph{slice genus} of a virtual knot $\upsilon$ is the smallest genus of all formal cobordisms taking $\upsilon$ to the unknot. For a virtual knot with all positive crossings, i.e. a positive virtual knot, the slice genus can be computed using a generalization of Rasmussen's theorem \cite{DKK}. Recall that if the oriented smoothing is preformed at all classical crossings of $\upsilon$, then the resulting set of immersed curves is in one-to-one correspondence with the \emph{virtual Seifert circles} of $\upsilon$. The virtual Seifert circles of a classical knot are the Seifert circles. 

\begin{theorem}[Generalization of Rasmussen \cite{DKK}] \label{thm_gen_ras} Let $\upsilon$ be a virtual knot diagram whose classical crossings are all positively signed. Let $r$ be the number of virtual Seifert circles of $\upsilon$ and $n$ the number of classical crossings of $\upsilon$. Then the slice genus is $(-r+n+1)/2$.
\end{theorem}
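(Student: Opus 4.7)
The plan is to prove the two usual inequalities: construct an explicit cobordism from $\upsilon$ to the unknot of the prescribed genus (upper bound), then use a Rasmussen-type invariant, generalized to virtual knots, to rule out anything smaller (lower bound).

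For the upper bound I would mimic Seifert's algorithm. Perform the oriented smoothing at each of the $n$ classical crossings; since every crossing is positive, each such smoothing agrees with the saddle move depicted in Figure \ref{fig_concordance}. Applying these $n$ saddles in sequence converts $\upsilon$ into a virtual diagram $\upsilon'$ whose classical crossings have all been resolved. By definition, the connected components of $\upsilon'$ are precisely the $r$ virtual Seifert circles of $\upsilon$, and because $\upsilon'$ has no classical crossings, each component is an unknot in the virtual category. Using $r-1$ deaths one reduces $\upsilon'$ to a single unknotted circle. The resulting formal cobordism $F$ from $\upsilon$ to the unknot has $\#b=0$, $\#s=n$, $\#d=r-1$, so
\[
\chi(F)=\#b-\#s+\#d=-(n-r+1).
\]
Since $F$ is a connected orientable surface with two boundary circles, $\chi(F)=-2g(F)$, giving $g(F)=(n-r+1)/2$. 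This produces a formal cobordism of the claimed genus, hence $g_s(\upsilon)\le (n-r+1)/2$.

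For the lower bound I would invoke the virtual Rasmussen invariant $s$ constructed in \cite{DKK}. This invariant is defined for any oriented virtual knot, depends only on the virtual knot type, and satisfies the slice-genus bound $|s(\upsilon)|\le 2g_s(\upsilon)$. The key computational input is that for a positive virtual diagram, the $s$-invariant can be read off from the oriented resolution exactly as in Rasmussen's classical argument: the generator supported on the all-oriented resolution survives and has quantum grading $n-r+1$, forcing $s(\upsilon)=n-r+1$. Combining with the slice-genus bound gives $g_s(\upsilon)\ge (n-r+1)/2$, matching the upper bound.

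The main obstacle is the lower bound, and specifically the verification that the virtual $s$-invariant retains the properties needed on positive diagrams. One needs a version of Khovanov homology for virtual knots whose associated filtration behaves well under saddles, births, and deaths, and for which the oriented-resolution generator is a distinguished class of known quantum grading. Assuming the machinery of \cite{DKK} is in place, the remaining work is a direct translation of Rasmussen's original computation to the virtual setting, using that a positive virtual diagram has a unique oriented resolution consisting of the $r$ virtual Seifert circles.
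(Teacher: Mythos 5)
The paper gives no proof of this statement; it is imported verbatim from \cite{DKK}, where it appears as a corollary of the virtual Rasmussen invariant, and your outline follows exactly that standard route. Your upper-bound construction (oriented-smoothing saddles at all $n$ positive crossings, detour-and-death on $r-1$ of the resulting virtual Seifert circles, and the Euler-characteristic count $\chi=-(n-r+1)=-2g$ for the connected two-boundary cobordism) is complete and correct, and your lower bound correctly identifies that the entire content is the existence of a virtual $s$-invariant satisfying $|s(\upsilon)|\le 2g_s(\upsilon)$ and $s(\upsilon)=n-r+1$ on positive diagrams -- which is precisely what \cite{DKK} establishes, so citing it there is legitimate rather than circular.
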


\subsection{Virtual Covers of Links}\label{sec_virt_cov} Here we review virtual covers \cite{vc_1}. Let $N$ be a c.c.o. $3$-manifold admitting a regular orientation preserving (o.p.) covering projection $\Pi:\Sigma \times \mathbb{R} \to N$, where $\Sigma$ is a c.c.o. $2$-manifold.  A \emph{lift by} $\Pi$ of $K^N$ is an oriented knot $\mathfrak{k}^{\Sigma \times \mathbb{R}}$ such that $\Pi(\mathfrak{k}^{\Sigma \times \mathbb{R}})=K^N$, with orientations preserved. The triple $(\mathfrak{k}^{\Sigma \times \mathbb{R}},\Pi,K^N)$ specifying a lift is called a \emph{virtual cover}. The virtual knot $\kappa(\mathfrak{k}^{\Sigma \times \mathbb{R}})$ is called the \emph{associated virtual knot}. Let $\Upsilon(K^N)$ denote the set of all associated virtual knots for all lifts by $\Pi$ of $K^N$. If $|\Upsilon(K^N)|=1$, then we call the unique element the \emph{invariant associated virtual knot}. The following lemma from \cite{vc_1}, is essentially the main result in \cite{cm_fiber} in a more compact form.

\begin{lemma} \label{lemma_cm_general} Suppose there are virtual covers $(\mathfrak{k}_0^{\Sigma \times \mathbb{R}},\Pi,K_0^N),(\mathfrak{k}_1^{\Sigma \times \mathbb{R}},\Pi,K_1^N)$ with invariant associated virtual knots $\upsilon_0,\upsilon_1$, respectively. If $K_0^N \leftrightharpoons K_1^N$, then $\upsilon_0 \leftrightharpoons \upsilon_1$.
\end{lemma}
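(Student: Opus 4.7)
The plan is to translate the ambient equivalence $K_0^N \leftrightharpoons K_1^N$ downstairs in $N$ into an ambient isotopy between appropriate lifts upstairs in $\Sigma \times \mathbb{R}$, and then invoke the invariance hypothesis. First I would fix the given lift $\mathfrak{k}_0^{\Sigma \times \mathbb{R}}$ of $K_0^N$. The equivalence $K_0^N \leftrightharpoons K_1^N$ provides a smooth $1$-parameter family $H_t \colon \mathbb{S}^1 \hookrightarrow N$, $t \in [0,1]$, of oriented embeddings interpolating between parametrizations of $K_0^N$ and $K_1^N$. Since $\Pi \colon \Sigma \times \mathbb{R} \to N$ is a covering projection and $\mathbb{S}^1$ is connected and locally path-connected, the homotopy lifting property produces a unique continuous lift $\widetilde{H}_t \colon \mathbb{S}^1 \to \Sigma \times \mathbb{R}$ with $\widetilde{H}_0$ parametrizing $\mathfrak{k}_0^{\Sigma \times \mathbb{R}}$ and $\Pi \circ \widetilde{H}_t = H_t$ for every $t$.

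Next I would check that each $\widetilde{H}_t$ is a smooth oriented embedding. Because $\Pi$ is an orientation-preserving local diffeomorphism, each $\widetilde{H}_t$ is automatically a smooth oriented immersion with smooth dependence on $t$, and injectivity at fixed $t$ follows since $\widetilde{H}_t(s_1) = \widetilde{H}_t(s_2)$ forces $H_t(s_1) = H_t(s_2)$, hence $s_1 = s_2$. Thus $\{\widetilde{H}_t\}$ realizes an ambient isotopy of $\Sigma \times \mathbb{R}$ carrying $\mathfrak{k}_0^{\Sigma \times \mathbb{R}}$ to some oriented knot $\mathfrak{k}'^{\Sigma \times \mathbb{R}}$, and by construction $\Pi(\mathfrak{k}') = K_1^N$, so $\mathfrak{k}'$ is a lift of $K_1^N$ by $\Pi$.

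Finally, since $\mathfrak{k}_0^{\Sigma \times \mathbb{R}}$ and $\mathfrak{k}'^{\Sigma \times \mathbb{R}}$ are ambient isotopic in $\Sigma \times \mathbb{R}$, they determine the same virtual knot class: $\kappa(\mathfrak{k}_0) = \kappa(\mathfrak{k}')$. The hypothesis that $K_0^N$ has an invariant associated virtual knot gives $\kappa(\mathfrak{k}_0) = \upsilon_0$; the corresponding hypothesis for $K_1^N$ applied to the particular lift $\mathfrak{k}'$ gives $\kappa(\mathfrak{k}') = \upsilon_1$. Hence $\upsilon_0 \leftrightharpoons \upsilon_1$, as required. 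The main obstacle I anticipate is the embedding/smoothness verification of the lifted family in the second step, namely confirming that a continuous lift produced by the covering-space homotopy lifting property is actually a smooth ambient isotopy of knots rather than merely a continuous family of maps; this should follow cleanly from $\Pi$ being a local diffeomorphism by transporting the smooth structure chartwise from the downstairs isotopy.
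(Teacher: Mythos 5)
Your proof is correct and is essentially the intended argument: the paper itself does not reprove this lemma (it quotes it from its predecessor and from the Chrisman--Manturov paper on fibered knots), and the proof there is exactly this lift-the-isotopy-via-the-homotopy-lifting-property argument followed by an appeal to the invariance of the associated virtual knot for $K_1^N$. The only cosmetic point is that $\{\widetilde{H}_t\}$ is an isotopy of embeddings rather than an ambient isotopy of $\Sigma \times \mathbb{R}$; since $\mathbb{S}^1$ is compact this is promoted to an ambient isotopy by the isotopy extension theorem, so $\kappa(\mathfrak{k}_0)=\kappa(\mathfrak{k}')$ as you claim.
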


The canonical example of a virtual cover is a knot $K$ in the complement of a fibered knot $J$ where $\text{lk}(J,K)=0$. Set $N_J=\overline{\mathbb{S}^3 \backslash V(J)}$, where $V(*)$ denotes a tubular neighborhood of $*$. Since $J$ is fibered, it has a Seifert surface $\Sigma_J$ such that the pair $(\overline{\mathbb{S}^3\backslash V(\Sigma_J)}, \overline{\mathbb{S}^3\backslash V(\Sigma_J)}  \cap \partial N_J)$ is diffeomorphic as a pair to $(\Sigma_J \cap N_J,\partial (\Sigma_J \cap N_J)) \times \mathbb{I}$ \cite{kawauchi}. Then $N_J$ may be identified with a mapping torus $(\Sigma_J \cap N_J) \times \mathbb{I}/\psi$ where $\psi: (\Sigma_J \cap N_J) \to (\Sigma_J \cap N_J)$ is an o.p. diffeomorphism. This gives a regular covering projection $\Pi_J: (\Sigma_J \cap N_J) \times \mathbb{R} \to N_J$. 
\newline
\newline
Since $\text{lk}(J,K)=0$, there is a virtual cover $(\mathfrak{k}^{\Sigma_J \times \mathbb{R}},\Pi_J,K^{N_J})$. The associated virtual knot is invariant when $K$ is in \emph{special Seifert form} (SSF) with respect to $\Sigma_J$.  Briefly, we say $K$ is in SSF with respect to $\Sigma_J$ if its image can be decomposed as a disjoint union of a finite number of embedded intervals on $\Sigma_J$ and a finite number of ``crossings''. Each ``crossing'' consists of a pair of disjoint arcs lying in different hemispheres of an embedded $3$-ball $B$ satisfying $B \cap \Sigma_J \approx \mathbb{B}^2$. A formal definition is given in \cite{cm_fiber}. See also \cite{vc_1} for further discussion.

\begin{figure}[htb]
\begin{tabular}{|ccc|} \hline & & \\
\begin{tabular}{c}
\def\svgwidth{1.6in}
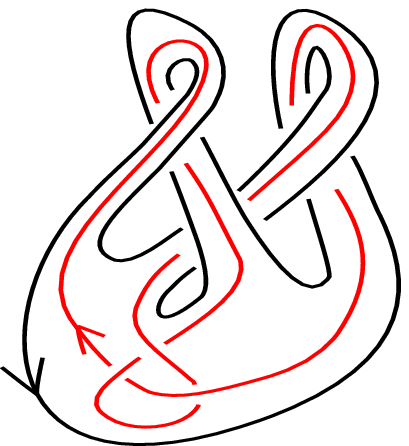 \end{tabular} & $\to$ & \begin{tabular}{c} \def\svgwidth{1.4in}
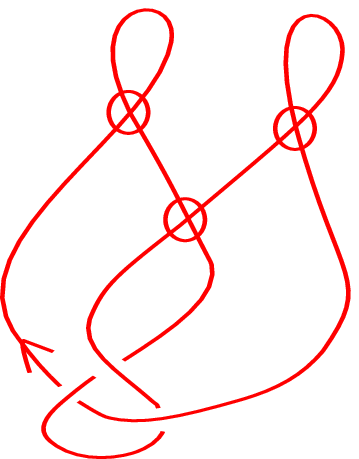 \end{tabular} \\ \hline
\end{tabular}
\caption{A link $J \sqcup K$ in SSF (left) and its invariant associated virtual $\upsilon$ (right).} \label{fig_ssf}
\end{figure}

If $K$ is in SSF with respect to a fiber $\Sigma_J$, let $[K;\Sigma_J]$ denote the oriented knot diagram on $\Sigma_J$ made of the arcs and crossings of the SSF. In \cite{cm_fiber} it was shown that $\upsilon\leftrightharpoons\kappa([K;\Sigma_J])$ is the invariant associated virtual knot. See Figure \ref{fig_ssf} for a typical example. The main result of \cite{vc_1} showed that $\upsilon$ functions essentially as an invariant of links in SSF:

\begin{theorem} \label{theorem_link_invar} For $i=0,1$, let $L_i=J_i \sqcup K_i$ be oriented ordered links with $J_i$ a fibered link, $K_i$ in SSF with respect to a fiber $\Sigma_i$ of $J_i$. For $i=0,1$, let $\upsilon_i$ be the invariant associated virtual knot for $L_i$. If $L_0 \leftrightharpoons L_1$, then $\upsilon_0 \leftrightharpoons \upsilon_1$.
\end{theorem}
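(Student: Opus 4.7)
The plan is to realize $L_0 \leftrightharpoons L_1$ by an orientation-preserving self-homeomorphism of $\mathbb{S}^3$, reducing to the case of a single link $L = J \sqcup K$ with two (possibly different) SSF presentations relative to fibers $\Sigma_0'$ and $\Sigma_1$ of $J$. One then shows that these two SSF presentations produce equivalent virtual knots, which follows from the uniqueness of fibers up to ambient isotopy rel boundary in a fibered knot complement, combined with Lemma \ref{lemma_cm_general}.

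First I would realize the equivalence $L_0 \leftrightharpoons L_1$ by an orientation-preserving self-homeomorphism $h\colon \mathbb{S}^3 \to \mathbb{S}^3$ with $h(J_0) = J_1$ and $h(K_0) = K_1$ as oriented knots. Setting $J := J_1$, $K := K_1$, and $\Sigma_0' := h(\Sigma_0)$, the restriction of $h$ sends the SSF decomposition of $K_0$ on $\Sigma_0$ diffeomorphically to an SSF decomposition of $K$ on the fiber $\Sigma_0'$ of $J$. Since $\kappa([K_0;\Sigma_0])$ is obtained from the knot $[K_0;\Sigma_0]$ in the thickened surface $\Sigma_0 \times \mathbb{R}$ and $\kappa$ is invariant under orientation-preserving diffeomorphisms of the thickened surface, this gives $\upsilon_0 \leftrightharpoons \kappa([K;\Sigma_0'])$.

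Next, I would use that any two fibers of a fibered knot complement are ambient isotopic rel boundary (see \cite{kawauchi}), producing an ambient isotopy $g_t$ of $N_J$, fixing $\partial N_J$ pointwise, with $g_1(\Sigma_0' \cap N_J) = \Sigma_1 \cap N_J$. By isotopy extension applied to the local SSF data (disjoint arcs on the fiber together with disjoint crossing balls), $g_t$ can be arranged to carry the SSF structure of $K$ on $\Sigma_0'$ to an SSF structure of $g_1(K)$ on $\Sigma_1$, yielding $\kappa([K;\Sigma_0']) \leftrightharpoons \kappa([g_1(K);\Sigma_1])$. Now $g_1(K) \leftrightharpoons K$ as knots in $N_J$ via $g_t$, both are in SSF with respect to $\Sigma_1$, and both therefore admit invariant associated virtual knots for the canonical covering projection $\Pi_J$ induced by $\Sigma_1$; Lemma \ref{lemma_cm_general} then gives $\kappa([g_1(K);\Sigma_1]) \leftrightharpoons \kappa([K;\Sigma_1]) = \upsilon_1$. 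Chaining the three equivalences produces $\upsilon_0 \leftrightharpoons \upsilon_1$.

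The main obstacle is the isotopy extension step: ensuring that $g_t$ can be chosen so as to transport the \emph{entire} SSF structure — arcs on the fiber together with the ambient crossing $3$-balls in $\mathbb{S}^3$ — rather than merely identifying the underlying fibers. Since SSF is a local condition along finitely many embedded arcs and disjoint $3$-balls meeting the fiber in a disk, this is a standard application of isotopy extension, but it is the step requiring careful bookkeeping of the compatibility between fiber and ambient data. Once that compatibility is secured, Lemma \ref{lemma_cm_general} closes the argument.
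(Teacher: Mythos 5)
Your proof is correct and follows essentially the route the authors themselves take: the theorem is imported here from \cite{vc_1} without proof, but your three-step reduction --- transport everything by the ambient isotopy realizing $L_0 \leftrightharpoons L_1$, invoke uniqueness of the fiber (minimal genus Seifert surface) up to ambient isotopy fixing $J$ \cite{whitten} to replace $h(\Sigma_0)$ by $\Sigma_1$, and close with Lemma \ref{lemma_cm_general} --- is exactly the argument pattern the paper uses for Lemma \ref{lemm_sf_invar} and for the $\Gamma_i=\Sigma_i$ reduction in the proof of Theorem \ref{theorem_sf_invar}. The isotopy-extension step you flag as the main obstacle is in fact immediate: an ambient isotopy of $(\mathbb{S}^3,J)$ carries arcs on the fiber and crossing balls meeting the fiber in a disc to data of exactly the same kind, which is all that SSF requires, so no separate bookkeeping is needed beyond noting that the induced map of fibers is an orientation-preserving diffeomorphism of diagrams.
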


\section{Semi-Fibered Concordance}

This section gives the precise definitions of fibered concordance and semi-fibered concordance. It is shown that the concordance class of the invariant associated virtual knot is a semi-fibered concordance invariant of two component links in SSF. Furthermore, it is shown this concordance class can be used to separate an infinite number of semi-fibered concordance classes of two component links.

\subsection{Semi-Fibered Concordance of Links} \label{sec_semi_fib} Let $J_0,J_1$ be oriented fibered knots in $\mathbb{S}^3$ with fiber bundle projections $p_0:\overline{\mathbb{S}^3 \backslash V(J_0)} \to \mathbb{S}^1$, $p_1:\overline{\mathbb{S}^3 \backslash V(J_1)} \to \mathbb{S}^1$, resp. Suppose there is an annulus $A$ embedded in $\mathbb{S}^3 \times \mathbb{I}$ such that $A \cap \mathbb{S}^3 \times \{i\}=(-1)^iJ_i$ for $i=0,1$ and such that there is a fiber bundle projection $p:\overline{\mathbb{S}^3\times \mathbb{I} \backslash V(A)} \to \mathbb{S}^1$ where $p|_{\mathbb{S}^3 \times \{i\}}=p_i$ for $i=0,1$. Then $J_0$ and $J_1$ are said to be \emph{fibered concordant}, denoted $J_0 \asymp_f J_1$ \cite{harer}.
\newline
\newline
When $J_0 \asymp_f J_1$, there is a covering projection $\Pi:H \times \mathbb{R} \to \overline{\mathbb{S}^3\times \mathbb{I} \backslash V(A)}$, where the fiber $H$ is a c.c.o. 3-manifold. Moreover, there are oriented Seifert surfaces $\Sigma_0$ of $J_0$ and $\Sigma_1$ of $J_1$ such that $\Sigma_0 \sqcup -\Sigma_1 \hookrightarrow \partial H$ and $\Pi|_{\Sigma_0 \times \mathbb{R}}, \Pi|_{\Sigma_1 \times \mathbb{R}}$ are the infinite cyclic covers of $J_0,J_1$, respectively.

\begin{remark} It is known that there are knots that are not concordant to a fibered knot \cite{livingston_survey}. Every fibered knot in $\mathbb{S}^3$ is fiber concordant to a hyperbolic fibered knot \cite{soma}.
\end{remark}
Let $L_0=J_0 \sqcup K_0$, $L_1=J_1 \sqcup K_1$ be two component links with $J_i$ fibered and $J_0 \asymp_f J_1$ via an annulus $A_J$ in $\mathbb{S}^3 \times \mathbb{I}$. If $K_0 \asymp K_1$ via an annulus $A_K$ in $\mathbb{S}^3 \times \mathbb{I}$ with $A_K \cap A_J =\emptyset$, then we will say that $L_0$ and $L_1$ are \emph{semi-fibered concordant}. This will be denoted by $L_0 \asymp_{sf} L_1$. 

\begin{lemma} \label{lemm_sf_invar}Let $L=J \sqcup K$ with $J$ fibered and $K$ in SSF with respect to some fiber $\Sigma_J$ and $\upsilon$ the invariant associated virtual knot. If $L \leftrightharpoons J_1 \sqcup K_1$, then $L \asymp_{sf} J_1 \sqcup K_1$, $K_1$ is in SSF with respect to some fiber $\Sigma_{J_1}$, and $\upsilon \leftrightharpoons \kappa([K_1;\Sigma_{J_1}])$.
\end{lemma}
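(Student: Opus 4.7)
The plan is to exploit the fact that $L \leftrightharpoons J_1 \sqcup K_1$ means the two ordered oriented links are ambient isotopic in $\mathbb{S}^3$, and then transport all the structure (fibration, Seifert surface, SSF data) along the isotopy. All three conclusions should follow by ``running the isotopy through $\mathbb{S}^3 \times \mathbb{I}$.''

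First I would fix an ambient isotopy $h_t:\mathbb{S}^3 \to \mathbb{S}^3$, $t \in \mathbb{I}$, with $h_0 = \text{id}$ and $h_1(J \sqcup K) = J_1 \sqcup K_1$ as ordered oriented links. Define the two disjoint embedded annuli
\[
A_J = \bigcup_{t \in \mathbb{I}} h_t(J) \times \{t\}, \qquad A_K = \bigcup_{t \in \mathbb{I}} h_t(K) \times \{t\}
\]
in $\mathbb{S}^3 \times \mathbb{I}$. These are disjoint because $h_t(J) \cap h_t(K) = \emptyset$ for every $t$. To promote the concordance of $J$ to $J_1$ to a fibered concordance in Harer's sense, I would take the fiber bundle projection $p_0:\overline{\mathbb{S}^3 \setminus V(J)} \to \mathbb{S}^1$ coming from the fibration of $J$ and define $p:\overline{\mathbb{S}^3 \times \mathbb{I} \setminus V(A_J)} \to \mathbb{S}^1$ by $p(x,t) = p_0(h_t^{-1}(x))$. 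The map $p$ is well-defined, smooth, and its restriction to each slice $\mathbb{S}^3 \times \{t\}$ is a fiber bundle projection conjugate to $p_0$; at $t=0,1$ it restricts to the fibrations of $J$ and $J_1$. Together with the disjoint annulus $A_K$, this yields $L \asymp_{sf} J_1 \sqcup K_1$, giving (1). In particular $J_1$ is fibered with fiber $\Sigma_{J_1} := h_1(\Sigma_J)$.

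Next, for (2), since SSF is a purely geometric condition on the pair $(K,\Sigma_J)$ (a decomposition of the image of $K$ into arcs embedded on the fiber together with finitely many crossing balls $B$ meeting the fiber in a disk), and $h_1$ is an orientation-preserving diffeomorphism of $\mathbb{S}^3$ with $h_1(J) = J_1$ and $h_1(\Sigma_J) = \Sigma_{J_1}$, the image decomposition for $K_1 = h_1(K)$ with respect to $\Sigma_{J_1}$ is again in SSF. For (3), the diffeomorphism $h_1|_{\Sigma_J}: \Sigma_J \to \Sigma_{J_1}$ carries the oriented knot diagram $[K;\Sigma_J]$ to $[K_1;\Sigma_{J_1}]$, preserving arcs, crossings, orientations, and signs, so the two diagrams on surfaces determine the same Gauss diagram and hence the same virtual knot: $\kappa([K;\Sigma_J]) \leftrightharpoons \kappa([K_1;\Sigma_{J_1}])$. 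Since $\upsilon \leftrightharpoons \kappa([K;\Sigma_J])$ by the result of \cite{cm_fiber} recalled before the lemma, this gives the final identification.

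Alternatively, once (1) and (2) are established, step (3) may be deduced directly from Theorem~\ref{theorem_link_invar}: both $L$ and $J_1 \sqcup K_1$ are links in SSF (with respect to $\Sigma_J$ and $\Sigma_{J_1}$ respectively), and they are equivalent as ordered oriented links, so their invariant associated virtual knots coincide. The main point requiring care is verifying that the formula $p(x,t) = p_0(h_t^{-1}(x))$ genuinely defines a fiber bundle projection on $\overline{\mathbb{S}^3 \times \mathbb{I} \setminus V(A_J)}$ (local triviality follows from the local triviality of $p_0$ conjugated by the ambient isotopy, together with smoothness of $h_t$ in $t$); everything else is a routine transport of geometric data along a diffeomorphism.
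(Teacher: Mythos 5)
Your proposal is correct and follows essentially the same route as the paper: take the trace annuli $A_J$, $A_K$ of the ambient isotopy, observe they are disjoint and that $A_J$ gives a fibered concordance, transport $\Sigma_J$ and the SSF structure by the time-one diffeomorphism, and conclude the virtual-knot identification from Theorem~\ref{theorem_link_invar}. The only difference is that you spell out the bundle projection $p(x,t)=p_0(h_t^{-1}(x))$ explicitly where the paper simply asserts $J \asymp_f J_1$ via $A_J$, and you cite the invariance of SSF under ambient isotopy directly rather than via Lemma~2 of the earlier paper; both are fine.
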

\begin{proof}  Let $F:\mathbb{S}^3 \times \mathbb{I}\to \mathbb{S}^3$ be the ambient isotopy taking $L$ to $J_1 \sqcup K_1$. Let $A_J$ be the annulus defined as the image of the map $a_J:\mathbb{S}^1 \times \mathbb{I} \to \mathbb{S}^3 \times \mathbb{I}$, $a_J(z,t)=(F(J(z),t),t)$. Similarly define an embedded annulus $A_K$. Then $J \asymp_f J_1$ via $A_J$ and $A_K \cap A_J=\emptyset$. Thus $L \asymp_{sf} J_1 \sqcup K_1$. Set $\Sigma_{J_1}=F(\Sigma_J,1)$. Then $K_1=F(K_0,1)$ is in SSF with respect to $\Sigma_{J_1}$ (see \cite{vc_1}, Lemma 2). The last claim follows from Theorem \ref{theorem_link_invar}.
\end{proof}

\begin{theorem}[Main Theorem]\label{theorem_sf_invar} Let $L_0=J_0 \sqcup K_0$, $L_1=J_1 \sqcup K_1$ be two component links with $J_0,J_1$ fibered. Suppose for $i=0,1$, $K_i$ in SSF with respect to some fiber $\Gamma_i$ of $J_i$ and let $\upsilon_i$ be the invariant associated virtual knot $\upsilon_i$. If $L_0 \asymp_{sf} L_1$, then $\upsilon_0 \asymp \upsilon_1$.
\end{theorem}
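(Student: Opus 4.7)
The plan is to lift the concordance annulus $A_K$ to the infinite cyclic cover of the fibered-concordance complement $\overline{\mathbb{S}^3 \times \mathbb{I} \setminus V(A_J)}$, thereby producing a concordance of lifts of $K_0$ and $K_1$ as knots in thickened surfaces, and then to translate this back to virtual knot concordance via Lemma 4.6 of \cite{CKS} cited in Section 1.2. To set up, I use the fibered concordance $J_0 \asymp_f J_1$ via $A_J$ to extract the covering $\Pi: H \times \mathbb{R} \to \overline{\mathbb{S}^3 \times \mathbb{I} \setminus V(A_J)}$ recalled in Section 3.1, with $\Sigma_0 \sqcup -\Sigma_1 \hookrightarrow \partial H$, where $\Sigma_i$ is a fiber of $J_i$ and $\Pi|_{\Sigma_i \times \mathbb{R}}$ agrees with the infinite cyclic cover $\Pi_{J_i}$ of $N_{J_i}$. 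Since all fibers of $J_i$ are ambient isotopic in $N_{J_i}$, I may replace $\Gamma_i$ with $\Sigma_i$ without altering $\upsilon_i$.

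Next, I lift $A_K$ through $\Pi$. Because $A_K \cap A_J = \emptyset$, $A_K$ lies in the base. The obstruction to lifting reduces to the homotopy class of the composite $p \circ A_K: \mathbb{S}^1 \times \mathbb{I} \to \mathbb{S}^1$, where $p$ is the fibration extending $p_0, p_1$. At $t = i$ the restriction $\mathbb{S}^1 \times \{i\} \to \mathbb{S}^1$ has degree equal to the intersection number of $K_i$ with the fiber $\Sigma_i$, namely $\text{lk}(J_i, K_i) = 0$, since $K_i$ in SSF lies on $\Sigma_i$ up to finitely many crossings. As the degree is integer-valued and continuous in $t$, the composite is null-homotopic on all of $\mathbb{S}^1 \times \mathbb{I}$, so $A_K$ lifts to a properly embedded oriented annulus $\mathfrak{a}: \mathbb{S}^1 \times \mathbb{I} \to H \times \mathbb{R}$ (embedded because $A_K$ is embedded and $\Pi$ is a local homeomorphism). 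Arranging orientations, the boundary components are lifts $\mathfrak{k}_i$ of $K_i$ in $\Sigma_i \times \mathbb{R}$ satisfying $\mathfrak{a} \cap (\Sigma_i \times \mathbb{R}) = (-1)^i \mathfrak{k}_i$.

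To conclude, each $\mathfrak{k}_i$ is a lift of $K_i$ by $\Pi_{J_i}$, so by the SSF invariance of the associated virtual knot I have $\kappa(\mathfrak{k}_i) \leftrightharpoons \upsilon_i$. The annulus $\mathfrak{a}$ realizes a Turaev concordance $\mathfrak{k}_0^{\Sigma_0 \times \mathbb{R}} \asymp \mathfrak{k}_1^{\Sigma_1 \times \mathbb{R}}$, which by \cite{CKS}, Lemma 4.6 gives $\upsilon_0 \asymp \upsilon_1$ as virtual knots. I expect the main obstacle to be the lifting step, specifically verifying that $\mathfrak{a}$ is a single embedded annulus (rather than disconnected or of higher genus) with the correct boundary components and consistent orientations relative to the Turaev convention. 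The SSF hypothesis enters essentially through $\text{lk}(J_i, K_i) = 0$, which is simultaneously the criterion for the virtual cover to exist and the criterion for the concordance annulus to lift.
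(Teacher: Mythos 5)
Your proposal is correct and follows essentially the same route as the paper's proof: reduce to the case $\Gamma_i=\Sigma_i$ using uniqueness of fibers up to ambient isotopy, lift the annulus $A_K$ through $\Pi:H\times\mathbb{R}\to\overline{\mathbb{S}^3\times\mathbb{I}\backslash V(A_J)}$ to obtain a Turaev concordance of lifts, apply Lemma 4.6 of \cite{CKS}, and invoke the invariance of the associated virtual knot to identify the boundary lifts with $\upsilon_0,\upsilon_1$. The only difference is that you spell out the covering-space lifting criterion via the degree of $p\circ A_K$ where the paper simply cites the homotopy lifting theorem; this is a welcome elaboration, not a different argument.
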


\begin{proof} Let $A_J,A_K,N,\Pi, H, \Sigma_0,\Sigma_1$ be as in the definitions above. The fibers $\Gamma_i$ and $\Sigma_i$ are minimal genus Seifert surfaces. Minimal genus Seifert surfaces of fibered knots $J_i$ are unique up to ambient isotopy in $\mathbb{S}^3$ acting as the identity on $J_i$ \cite{whitten}. Thus for $i=0,1$, we may use Lemma \ref{lemm_sf_invar} to move $\Gamma_i$ and $K_i$ so that we have a new knot in SSF with respect to $\Sigma_i$. As this does not affect the semi-fibered concordance classes of $L_i$ or the concordance class of $\upsilon_i$, we may as well assume from the beginning that $\Gamma_i=\Sigma_i$.
\newline
\newline
Now, there are virtual covers $(\mathfrak{k}_i^{\Sigma_i \times \mathbb{R}},\Pi|_{\Sigma_i \times \mathbb{R}}, K_i^{N_{J_i}})$, $i=0,1$. By the homotopy lifting theorem, $A_K$ lifts to a smoothly embedded annulus $\mathfrak{a}:\mathbb{S}^1 \times \mathbb{I} \to H \times \mathbb{R}$ such that $\mathfrak{a}(\mathbb{S}^1,0)$ is identified with $\mathfrak{k}_0^{\Sigma_0 \times \mathbb{R}}$. Set $\mathfrak{l}=\mathfrak{a}(\mathbb{S}^1,1)$ so that we have another virtual cover $(\mathfrak{l}^{\Sigma_1 \times \mathbb{R}}, \Pi|_{\Sigma_1 \times \mathbb{R}}, K_1^{N_{J_1}})$. Orient $\mathfrak{l}$ appropriately, so that it matches the orientation of lifts by $\Pi|_{\Sigma_1 \times \mathbb{R}}$ of $K_1^{N_{J_1}}$. 
\newline
\newline
Then $\mathfrak{k}_0$ and $\mathfrak{l}$ are concordant as knots in thickened surfaces. Hence we have that $\kappa(\mathfrak{k}_0)\asymp\kappa(\mathfrak{l})$ as virtual knots (by \cite{CKS}, Lemma 4.6). Since equivalent virtual knots are concordant, and all lifts of $K_1$ by $\Pi|_{\Sigma_1\times \mathbb{R}}$ stabilize to the same virtual knot (by hypothesis), $\upsilon_1 \asymp \kappa(\mathfrak{l})$ and $\upsilon_0 \asymp \kappa (\mathfrak{k}_0)$. This completes the proof.
\end{proof}

The following theorem shows that the associated virtual knots distinguish a large set of semi-fibered concordance classes. Moreover, it shows that the HT polynomial is useful at separating these classes. It is inspired by \cite{UK}, Theorem 1, and \cite{turaev_cobordism}, Theorem 1.6.1.

\begin{figure}[htb]
\begin{tabular}{|c|c|} \hline & \\
\begin{tabular}{c}
\def\svgwidth{4in} 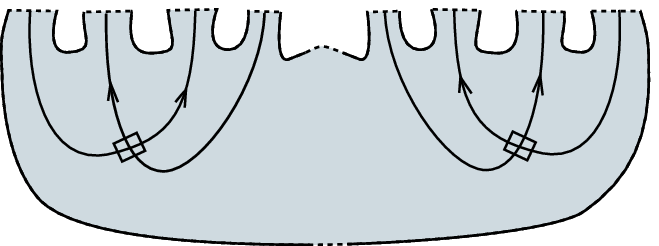 \end{tabular} &
\begin{tabular}{cc}
\def\svgwidth{.75in} 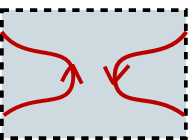 & 
\def\svgwidth{.75in} 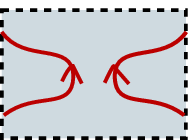 \\
$\downarrow $ & $\downarrow$ \\
\def\svgwidth{.75in} 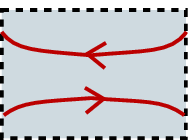 &
\def\svgwidth{.75in} 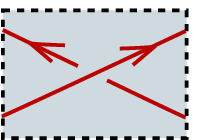 \\
\multicolumn{2}{c}{\underline{join by surgeries}}
\end{tabular} 
 \\ \hline
\end{tabular}
\caption{A fiber $\Sigma_J$ in disc-band form.} \label{fig_disc_band_inf}
\end{figure}

\begin{theorem} \label{thm_ht_nontriv} Let $J$ be a fibered knot and $\Sigma_J$ a fiber of genus $g$ in disc-band form, with symplectic basis $a_1,b_1,\ldots,a_g,b_g$ . Let $h=\sum_{i=1}^g p_i a_i+q_i b_i \in H_1(\Sigma_J;\mathbb{Z})$ such that for all $i$, $p_i,q_i \ne 0$, and $\gcd(|p_i|,|q_i|)=1$. Then there is an infinite set of pairwise non-semi-fibered  concordant non-split links $J \sqcup K_k$ with $K_k$ in SSF with respect to $\Sigma_J$, $[K_k;\Sigma_J]$ represents $h$, and $w_{\upsilon_k}(t) \ne 0$, where $\upsilon_k$ is the invariant associated virtual knot.
\end{theorem}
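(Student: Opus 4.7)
The plan is to produce an infinite family of knots $K_k$ satisfying the hypotheses whose invariant associated virtual knots $\upsilon_k$ are pairwise distinguishable by their HT polynomials. The logical flow is: (a) construct the family; (b) verify that the HT polynomials $w_{\upsilon_k}(t)$ are pairwise distinct and non-zero; (c) deduce pairwise non-semi-fibered concordance from Theorem~\ref{theorem_sf_invar} combined with Theorem~\ref{thm_ht_invar}; and (d) deduce non-splitness from the fact (established in \cite{vc_1}) that a split link gives rise to a classical associated virtual knot, together with the observation that every classical knot has vanishing HT polynomial.

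For the base representative I would use the disc-band presentation of $\Sigma_J$ of Figure~\ref{fig_disc_band_inf}. On the $i$-th handle (the once-punctured torus formed by the $a_i$- and $b_i$-bands and the central disc), place an embedded simple closed curve $M_i$ representing $p_i a_i + q_i b_i$; such a curve exists because $\gcd(|p_i|,|q_i|)=1$ makes the class primitive, hence realizable as a $(p_i,q_i)$-torus-knot-style curve. Band-summing the $M_i$'s through the central disc produces a single simple closed curve $\gamma\subset \Sigma_J$ representing $h$. Since $\gamma$ is embedded in $\Sigma_J$ its SSF diagram carries no classical crossings, so the HT polynomial so far is zero.

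To inject classical SSF crossings with non-zero index I would perform a local ``figure-eight'' surgery near the first handle. Take $F$ to be a null-homologous figure-eight on $\Sigma_J$ whose two lobes represent $+a_1$ and $-a_1$, with the single self-crossing realized in SSF form (two arcs in opposite hemispheres of a 3-ball meeting $\Sigma_J$ in a disc). Connect-summing $F$ into $\gamma$ produces a knot representing $h$ with at least one distinguished classical crossing $x$. Oriented smoothing at $x$ yields loops of classes $a_1$ and $h-a_1$, so by the symplectic intersection pairing
\[
|\text{index}(x)| = |a_1\cdot (h-a_1)| = |a_1\cdot h| = |q_1|\ne 0,
\]
and $x$ contributes $\pm t^{|q_1|}$ to $w_{\upsilon}(t)$. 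Iterating this surgery $k$ times at disjoint locations with a consistent choice of over/under convention (so that contributions do not cancel) defines $K_k$, and the HT polynomial acquires a term whose coefficient of $t^{|q_1|}$ scales linearly in $k$.

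The principal technical obstacle is bookkeeping the secondary SSF crossings forced by the unavoidable surface intersections of $F$'s lobes with $\gamma$: because $|a_1\cdot h|=|q_1|$, every lobe of $F$ must cross $\gamma$ transversely in at least $|q_1|$ points, each of which gives a further classical crossing contributing to $w_{\upsilon_k}(t)$. By placing the $k$ figure-eights in pairwise disjoint regions of $\Sigma_J$ and arranging their lobes identically, the combined secondary contribution from each iteration is the same, so the total HT polynomial takes the form $k\cdot P(t)$ for a fixed polynomial $P(t)$ whose $t^{|q_1|}$-coefficient is non-zero. Pairwise distinctness of the $w_{\upsilon_k}(t)$ then forces pairwise non-semi-fibered concordance of the $L_k=J\sqcup K_k$ via Theorems~\ref{theorem_sf_invar} and \ref{thm_ht_invar}, and non-splitness follows immediately from $w_{\upsilon_k}(t)\ne 0$.
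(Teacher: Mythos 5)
Your overall strategy---make the HT polynomials $w_{\upsilon_k}(t)$ pairwise distinct and then invoke Theorems \ref{theorem_sf_invar} and \ref{thm_ht_invar}---is genuinely different from the paper's, and the difference matters because your route depends on a computation you have not carried out. The paper's construction produces, for every $k$, the \emph{same} HT polynomial $w_{\upsilon_k}(t)=2t\sum_i|p_iq_i|$: the $|p_iq_i|$ intersections of the parallel copies of $a_i$ and $b_i$ are resolved by identical local tangles whose classical crossings all have the same sign and index of absolute value $1$, so no cancellation is possible, and $w\ne 0$ yields non-splitness. The infinite family is then separated not by the HT polynomial but by the \emph{slice genus}: the inserted long $(2k+1,2)$ torus knots keep every classical crossing positive, so Theorem \ref{thm_gen_ras} applies and the slice genus of $\upsilon_k$ grows with $k$.

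Your version instead hangs everything on the claim that $w_{\upsilon_k}(t)=k\cdot P(t)$ with $P(t)\ne 0$, and that is exactly where the gap is. You correctly observe that each lobe of a figure-eight in class $\pm a_1$ must cross $\gamma$ in at least $|a_1\cdot h|=|q_1|$ points, producing secondary classical crossings, but you never compute their indices or signs, and ``a consistent choice of over/under convention (so that contributions do not cancel)'' is not an argument: the sign of a crossing is its local writhe, determined by the over/under choice together with the (forced) orientations of the two strands, and a secondary crossing can perfectly well contribute $-t^{|q_1|}$ against the $+t^{|q_1|}$ of your distinguished crossing $x$. A fix is available---choose the over/under data so that \emph{every} classical crossing of $[K_k;\Sigma_J]$ is positive, so that every term of $w_{\upsilon_k}$ is a non-negative multiple of some $t^m$ and the contribution of $x$ survives---but you must say this explicitly, and you must also justify that the per-figure-eight contribution is literally the same polynomial for each of the $k$ copies (e.g.\ by noting that the smoothed loops at corresponding secondary crossings differ only by null-homologous figure-eight pieces, so their indices agree), so that the total really is injective in $k$. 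Without these two points the pairwise distinctness of the $w_{\upsilon_k}$, and hence the whole conclusion, is unproved. Alternatively, once all crossings are positive you could drop the HT bookkeeping for separating the family and, as the paper does, distinguish the $\upsilon_k$ by slice genus via Theorem \ref{thm_gen_ras}.
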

\begin{proof} Since $\Sigma_J$ is in disc-band form \cite{bz}, we may assume that the $\Sigma_J$ and the symplectic  basis is as depicted in Figure \ref{fig_disc_band_inf}. Curves representing $a_i,b_i$ are also denoted as $a_i,b_i$. For each $i$, draw $p_i$ (resp. $q_i$) simple closed curves on $\Sigma_J$ parallel to $a_i$ (resp. $b_i$). This makes $|p_i q_i|$ intersections in a neighborhood of the point $a_i \cap b_i$ (see Figure \ref{fig_ht_nontriv_inf}, left). For each intersection, substitute the same picture in a purple box from Figure \ref{fig_ht_nontriv_inf}, center: top left if $p_i,q_i>0$, top right if $p_i,q_i<0$, bottom left if $p_i>0$ and $q_i<0$, bottom right if $p_i<0$ and $q_i>0$. 
\newline
\newline
Since $\gcd(|p_k|,|q_k|)=1$, we have $g$ knot diagrams $[M_1;\Sigma_J],\ldots,[M_g;\Sigma_J]$, oriented by choice of a purple box. For $k\ge 1$, insert the long $(2k+1,2)$ torus knot in a small arc on $[M_1; \Sigma_J]$ away from any of the purple boxes (as in the orange circle in Figure \ref{fig_ht_nontriv_inf}). These knot diagrams may be surgered together from left to right in $\Sigma_J$ as in Figure \ref{fig_disc_band_inf}, right. Call the resulting knot diagram $[K_k;\Sigma_J]$. Clearly, this corresponds to an oriented knot $K_k$ in SSF with respect to $\Sigma_J$. Now compute $w_{\upsilon_k}(t)=2 t \cdot \sum_{i=1}^g |p_iq_i| \ne 0$. Hence, $J \sqcup K_k$ is non-split \cite{cm_fiber}. Lastly, Theorem \ref{thm_gen_ras} implies that increasing $k$ by $1$ increases the slice genus of $\upsilon_k$ by $1$.
\end{proof}

\begin{figure}[htb]
\begin{tabular}{|ccc|} \hline & & \\
\begin{tabular}{c}
\def\svgwidth{1.9in}
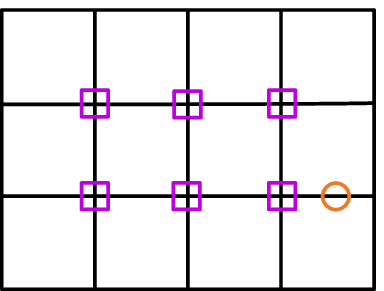 \end{tabular} & 
\begin{tabular}{cc}
\def\svgwidth{.75in} 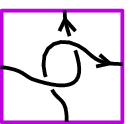 & 
\def\svgwidth{.75in} 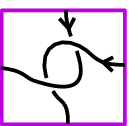 \\
\def\svgwidth{.75in} 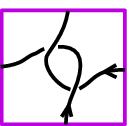 &
\def\svgwidth{.75in} 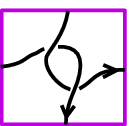 \\
\end{tabular} 
&
\begin{tabular}{c}
\def\svgwidth{1.3in} 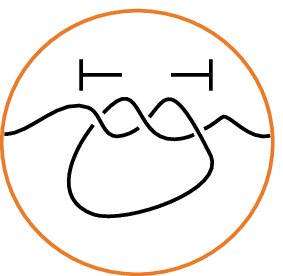
\end{tabular}
\\ \hline
\end{tabular}
\caption{The construction used in the proof of Theorem \ref{thm_ht_nontriv}.} \label{fig_ht_nontriv_inf}
\end{figure}
\section{Ribbon and Slice Obstructions} \label{sec_sli_rib} Now we will use the results of the previous section to identify slice and ribbon obstructions. Recall that a knot in $\mathbb{S}^3$ is said to be \emph{slice} if $K \asymp \bigcirc$, with $\bigcirc$ bounding an embedded disc. A knot is said to be \emph{ribbon} in $\mathbb{S}^3$ if it bounds an immersed disc $\mathbb{B}^2$ in $\mathbb{S}^3$ having only ribbon singularities: the pre-image of any singular arc is two disjoint simple arcs $a_b$ and $a_i$, where $a_b$ intersects $\partial \mathbb{B}^2$ in its endpoints and $a_i \subseteq \text{int}(\mathbb{B}^2)$. We will use the same definition for slice and ribbon in any oriented $3$-manifold $N$: just replace $\mathbb{S}^3$ with $N$.
\newline
\newline
A ribbon knot $K^N$ is a band connected sum in $N$ of the boundaries of $n$ disjoint discs embedded in $N$ (i.e. an $n$ component unlink in $N$). Performing saddle moves on the bands gives $n$ disjoint $2$-discs which may be eliminated with deaths. Likewise, a virtual knot $\upsilon$ is said to be \emph{ribbon} if there is concordance of $\upsilon$ with the unknot consisting of a sequence of extended Reidemeister moves, deaths, and saddle moves. Thus, if $\mathfrak{k}^{\Sigma \times \mathbb{R}}$ is a ribbon knot in the thickened c.c.o. surface $\Sigma \times \mathbb{R}$, then its stabilization $\kappa(\mathfrak{k}^{\Sigma \times \mathbb{R}})$ is a ribbon virtual knot.   

\begin{theorem} \label{thm_ribbon}(with R. Todd) Let $L=J \sqcup K$ with $J$ fibered and $K$ in SSF with respect to some fiber $\Sigma$ of $J$ and $\upsilon$ the invariant associated virtual knot. 
\begin{enumerate}
\item If $L \asymp_{sf} J_0 \sqcup \bigcirc$, with $\bigcirc$ bounding an embedded disc in the complement of $J_0$, then $\upsilon\asymp \bigcirc$ (i.e. it is a slice virtual knot).
\item If $K$ bounds a ribbon disc disjoint from $J$, then $L \asymp_{sf} J \sqcup \bigcirc$, where $\bigcirc$ bounds a disc in the complement of $J$ and $\upsilon$ is a ribbon virtual knot.
\end{enumerate}
\end{theorem}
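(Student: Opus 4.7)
The plan is to deduce both parts from the Main Theorem (Theorem~\ref{theorem_sf_invar}), after first observing that an unknot bounding a disc in the complement of the fibered knot has the classical unknot as its invariant associated virtual knot.

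For the slice claim (1), since $\bigcirc$ bounds an embedded disc in the complement of $J_0$, an ambient isotopy supported in $\mathbb{S}^3 \backslash J_0$ can shrink $\bigcirc$ into a coordinate ball lying in the interior of a fiber $\Sigma_0$ of $J_0$. This isotopy is itself a semi-fibered concordance (trivial on $J_0$), so after replacement we may assume $\bigcirc \subset \Sigma_0$. Then $\bigcirc$ is trivially in SSF with respect to $\Sigma_0$, and $[\bigcirc;\Sigma_0]$ has no arrows, so the invariant associated virtual knot is the classical unknot. Applying Theorem~\ref{theorem_sf_invar} to $L \asymp_{sf} J_0 \sqcup \bigcirc$ then yields $\upsilon \asymp \bigcirc$.

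For part (2), the ribbon disc $D$ for $K$ is disjoint from $J$. I would remove the interior of a small sub-disc of $D$ to produce a ribbon concordance $A_K \subset (\mathbb{S}^3 \backslash J) \times \mathbb{I}$ between $K$ and a tiny unknot $\bigcirc$ bounding the deleted sub-disc. Combined with the trivial fibered concordance $A_J = J \times \mathbb{I}$, this gives $L \asymp_{sf} J \sqcup \bigcirc$ with $\bigcirc$ bounding a disc in the complement of $J$. The key feature inherited from the ribbon singularity condition is that $A_K$ admits a Morse decomposition in the $\mathbb{I}$-direction containing only saddles and deaths and no births.

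The main step is to transport this ribbon structure to $\upsilon$. Following the proof of Theorem~\ref{theorem_sf_invar}, lift $A_K$ through the infinite cyclic cover $\Pi_J: (\Sigma_J \cap N_J) \times \mathbb{R} \to N_J$, starting from the lift $\mathfrak{k}$ of $K$, to a properly embedded annulus $\mathfrak{a}$ ending at some lift $\mathfrak{l}$ of $\bigcirc$. Since $\Pi_J$ is a local diffeomorphism, each critical point of $\mathfrak{a}$ projects to a critical point of $A_K$ of the same index; hence $\mathfrak{a}$ also has no births. By \cite{CKS}, Lemma 4.6, the resulting virtual concordance from $\upsilon \asymp \kappa(\mathfrak{k})$ to $\kappa(\mathfrak{l})$ uses only extended Reidemeister moves, saddles, and deaths. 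Part (1), applied with the trivial fibered concordance of $J$ to itself, identifies $\kappa(\mathfrak{l})$ as the classical unknot, so $\upsilon$ is a ribbon virtual knot. The main obstacle I anticipate is verifying that the Morse lift genuinely preserves birth-freeness; this should follow from locality of the critical points and the fact that the cover is a diffeomorphism on small coordinate balls around them, but care will be needed to ensure no extraneous critical points are introduced during the lifting.
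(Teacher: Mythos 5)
Your proposal is correct, and it follows the paper's strategy in outline: part (1) is deduced from Theorem \ref{theorem_sf_invar} exactly as in the paper (you supply the extra, worthwhile detail that $\bigcirc$ can be isotoped into a fiber so that its invariant associated virtual knot is the classical unknot), and part (2) establishes $L \asymp_{sf} J \sqcup \bigcirc$ by performing saddles and deaths on the ribbon disc and then transports the ribbon structure through the covering $\Pi_J$. Where you diverge is in \emph{what} you lift in part (2): the paper lifts the immersed ribbon disc $D:\mathbb{B}^2 \to N_J$ itself (using simple connectivity of $\mathbb{B}^2$) to an immersed disc $\mathfrak{d}$ with $\partial\mathfrak{d}=\mathfrak{k}$, and checks that every singularity of $\mathfrak{d}$ is again a ribbon singularity and that no new ones appear; you instead lift the embedded movie-annulus $A_K$ and check that the Morse function in the $\mathbb{I}$-direction pulls back with identical critical points, so birth-freeness is preserved. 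Your version is arguably cleaner at the point you flag as the obstacle --- since $\Pi$ respects the $\mathbb{I}$-coordinate, the height function on the lift is literally the same function, so no extraneous critical points can appear --- whereas the paper's version requires the slightly delicate analysis of preimages of singular arcs but yields the stronger geometric statement that $\mathfrak{k}$ bounds an honest ribbon disc in $\Sigma \times \mathbb{R}$. Both routes ultimately rest on the same final step, namely that a birth-free (equivalently, ribbon) cobordism of knots in thickened surfaces descends to a birth-free virtual knot concordance under stabilization; the paper asserts this just before the theorem, and your appeal to Lemma 4.6 of \cite{CKS} is using the same fact, so no gap results.
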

\begin{proof} The first claim is an immediate consequence of Theorem \ref{theorem_sf_invar}. For the second claim, let $A_J$ be the annulus $J \times \mathbb{I}$ in $\mathbb{S}^3 \times \mathbb{I}$. Then $J \asymp_f J$ via $A_J$ and we have the covering projection $\Pi:(\Sigma \times \mathbb{I}) \times \mathbb{R} \to \overline{\mathbb{S}^3 \times \mathbb{I}\backslash V(A_J)}$. Let $D:\mathbb{B}^2 \to \overline{\mathbb{S}^3\backslash V(J)}$ denote the immersion of the ribbon disc for $K$ and by abuse of notation, the ribbon disc itself. Using saddle moves and deaths on $D$, we see that $L \asymp_{sf} J \sqcup \bigcirc$, where $\bigcirc$ bounds a disc in $\mathbb{S}^3$ disjoint from $J$.  
\newline
\newline
Now let $(\mathfrak{k}^{\Sigma \times \mathbb{R}},\Pi_J,K^{N_J})$ be a virtual cover. To see that $\upsilon$ is ribbon, note that $D$ lifts to an immersed disc $\mathfrak{d}:\mathbb{B}^2 \to \Sigma \times \mathbb{R}$  with $\partial \mathfrak{d}=\mathfrak{k}$ (essentially by the chain rule and lifting criterion). Consider a ribbon singularity of $D$ as a path in $D$ connecting two points of $K$. This lifts to a path in $\mathfrak{d}$ connecting two points of $\mathfrak{k}$. Since $\Pi_J(\mathfrak{d})=D$, these are ribbon singularities of $\mathfrak{d}$. Moreover, $\mathfrak{d}$ can have no other singularities. Using saddle moves and deaths on $\mathfrak{d}$ in $(\Sigma \times \mathbb{I}) \times \mathbb{R}$, we see that $\mathfrak{k}$ is ribbon. Hence $\upsilon$ is a ribbon virtual knot.
\end{proof}

\textbf{Example:} Recall that $J \sqcup K$ is a \emph{ribbon link} if it is the boundary of an immersed $\mathbb{B}^2 \sqcup \mathbb{B}^2$ all of whose singularities are ribbon singularities.  The left hand side of Figure \ref{fig_sato_levine_1} shows a two component link $L=J \sqcup K$ both of whose components are ribbon. Indeed, $J$ is a square knot and $K$ is a trivial knot. The square knot is a fibered knot with a fiber $\Sigma_J$ drawn as in the figure. Note that $K$ is in SSF relative to $\Sigma_J$. An invariant associated virtual knot $\upsilon$ can be found. Note that $w_{\upsilon}(t)=-2t \ne 0$. Hence, $K$ cannot bound a ribbon disc in the complement of $J$. This also implies that $L$ is not a \emph{pure ribbon link} i.e. a ribbon link where the components bound disjoint ribbon discs.
\newline
\newline
A \emph{boundary link} is a link where the components bound disjoint Seifert surfaces. Any pure ribbon link is a boundary link because the ribbon singularities may be modified as in \cite{cochran_deriv} to obtain Seifert surfaces. Cochran's generalized Sato-Levine invariant  $\beta(L)$ \cite{cochran_geom} is a $\mathbb{Z}^{\infty}$-valued link concordance invariant that is equal to $(0,0,0,\ldots)$ on boundary links, hence on pure ribbon links.  To compute the invariant begin with a link $J \sqcup K$ with $\text{lk}(J,K)=0$ and find a Seifert surface $\Sigma_K$ of $K$ that does not intersect $J$. $\Sigma_K$ may be chosen so that $\Sigma_K \cap \Sigma_J$ is connected; call this component a \emph{derivative} $D(L)$. It is oriented following the convention of \cite{cochran_geom}. The first coordinate of the Sato-Levine invariant is $\text{lk}(D(L)^+,D(L))$, where $D(L)^+$ is the positive push off of $D(L)$ from $\Sigma_J$. This process is iterated to find the remaining coordinates.
\newline
\newline
For our link $L=J \sqcup K$, a convenient $D(L)$ is given on the right of Figure \ref{fig_sato_levine_1}. The first coordinate of $\beta(L)$ is zero. It is easy to see that $J \sqcup D(L)$ is a ribbon link (e.g. it is a symmetric union \cite{eisermann_lamm}). All the remaining coordinates in $\beta(L)$ are zero. Thus the associated virtual knot detects that $L$ is not a pure ribbon link, whereas $\beta(L)$ does not. Note that the Jones polynomial nullity detects this link as non-ribbon  \cite{eisermann_ribbon}.  However the associated virtual knot detects something subtler: every ribbon disc for $K$ must intersect $J$. \hfill $\square$
\newline
\begin{figure}[htb]
\begin{tabular}{|ccc|} \hline & & \\
\def\svgwidth{2in}
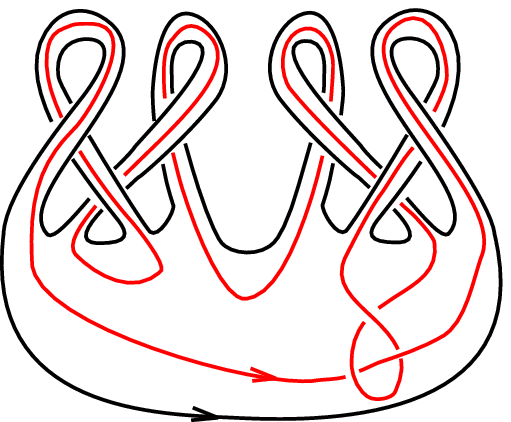 & & \def\svgwidth{2in}
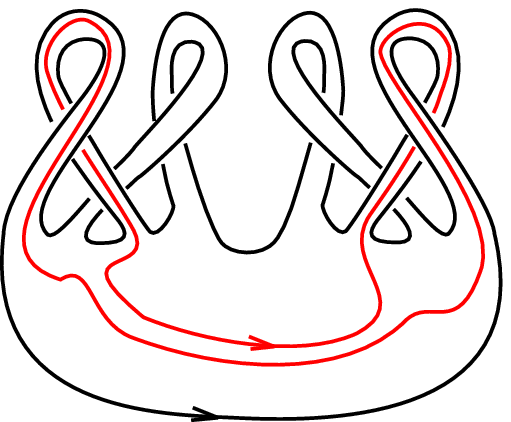\\ \hline
\end{tabular}
\caption{A link that is not pure ribbon (left) and a derivative  used for computation of the Sato-Levine Invariant (right).} \label{fig_sato_levine_1}
\end{figure}

\textbf{Example:} We return now to our motivating examples from \cite{gst}. Recall they are an infinite family of two component slice links $L_n=J_0 \sqcup V_n$ with $J_0$ a square knot and $V_n$ a smooth simple closed curve on a fiber $\Sigma_J$ of $J$ (by \cite{scharlemann}). Thus, $V_n$ is in SSF with respect to $\Sigma_J$. The SSF for $V_n$ has no crossings in balls, so the invariant associated virtual knot $\upsilon_n \leftrightharpoons \bigcirc$. Since $\upsilon_n$ is a ribbon virtual knot, one might hope to construct a ribbon presentation for $L_n$ using virtual covers. Such a construction would require a fibered concordance of $J_0$ with the unknot obtained by pushing an immersed ribbon disc for $J_0$ into $\mathbb{B}^4$.  A method for doing exactly this was discovered by Aitchison and Silver \cite{silver_fibred}.
\newline
\newline 
Here we describe the method in brief. A slice disc $D'$ of $J_0$ in $\mathbb{B}^4$ together with the fiber $\Sigma_J$ bound a solid two holed torus $H$.  The Alexander polynomial of $J_0$ specifies an automorphism $\phi$ of $\pi_1(H,*)\cong \left<x_1,x_2|-\right>$ (called the \emph{monodromy}). An o.p. diffeomorphism $\psi:H \to H$ is constructed from $\phi$. On the left in Figure \ref{fig_square_knot_disc}, $H$ is obtained by identifying each of the ellipses on top to the ellipse immediately below it. The yellow bands indicate the image of $x_1$ and $x_2$ under $\phi$, as in \cite{silver_fibred}; the thick dumbbell represents possible positions for $*$. Lastly, a handlebody decomposition of $\overline{\mathbb{B}^4\backslash V(D')}$ is recovered from the mapping torus defined by $\psi$. The ribbon disc $D$ on the right in Figure \ref{fig_square_knot_disc} is also obtainable from the handelbody decomposition.
\newline
\newline 
\underline{Problem:} Either construct a ribbon presentation for each $L_n$ such that the ribbon disc for $J_0$ is $D$ (see Figure \ref{fig_square_knot_disc}, right), or show that no such presentation exists. \hfill $\square$

\begin{figure}
\begin{tabular}{|c|} \hline \\
\xymatrix{
\begin{tabular}{c}
\def\svgwidth{1.9in}
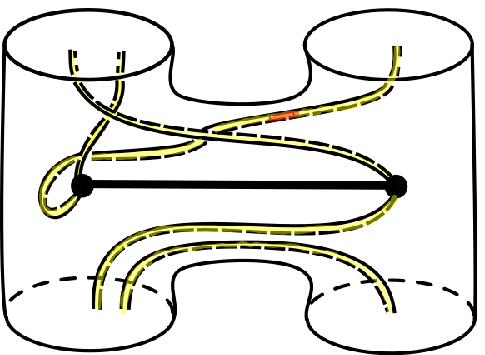\end{tabular} \ar[r]&  
\begin{tabular}{c}
\def\svgwidth{1.75in}
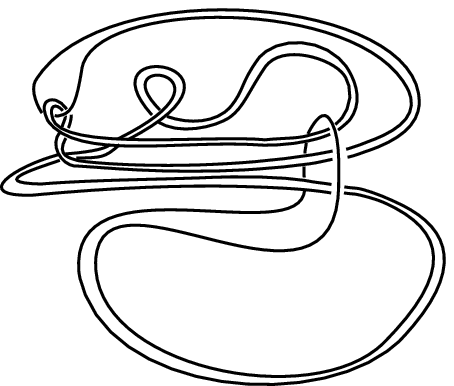 \end{tabular}} 
\\
\hline
\end{tabular}
\caption{Construction of a ribbon disc for the square knot using the method of Aitchison and Silver.}\label{fig_square_knot_disc}
\end{figure}

\section{Injectivity of Satellite Operators}
\label{sec_sat}
Take a knot $P$ in a solid torus and tie it into the shape of a knot $K$ in $\mathbb{S}^3$. Roughly speaking, this is the satellite knot $P(K)$ with pattern $P$ and companion $K$. For a fixed pattern $P$, do non-concordant companions yield non-concordant satellites with pattern $P$, i.e. is the map $K \to P(K)$ injective in concordance? Here we consider this question for the map $J \sqcup K \to J \sqcup P(K)$ in semi-fibered concordance. We begin with some preparatory results.

\subsection{Satellite Operators and the HT polynomial} Let $K$ be a knot in $\mathbb{S}^3$ and $l$ be a longitude on a closed tubular neighborhood $\overline{V}(K)$ with $\text{lk}(K,l)=0$. Let $P$ be a knot in $\overline{V}=\mathbb{S}^1 \times \mathbb{B}^2$ that is not contained in a $3$-ball in $B$. Let $f:\overline{V} \to \overline{V}(K)$ be an o.p. diffeomorphism such that $f(\mathbb{S}^1 \times \{0\})$ is identified with $K$, $f$ takes a meridian of $\overline{V}$ to a meridian of $\overline{V}(K)$, and the longitude $\mathbb{S}^1 \times \{1\}$ is mapped to $l$. Then the image $P(K)$ of $P$ under $f$ in $\mathbb{S}^3$ is an oriented knot called the \emph{untwisted satellite} with \emph{pattern} $P$ and \emph{companion} $K$. The knot $\mathbb{S}^1 \times \{0\}$ in $\overline{V}$ represents a generator of $H_1(\overline{V};\mathbb{Z})\cong \mathbb{Z}$ and $[P]$ some integer $q$ times this generator. Define the \emph{absolute winding number of $P$} to be $r(P)=|q|$. The map $K \to P(K)$ is called a \emph{satellite operator}.
\newline
\newline
A \emph{classical satellite} (compare with \cite{sil_wil_sat}) of a virtual knot is defined as follows. Let $\upsilon$ be a virtual knot diagram. Replace each arc of the diagram with $p$ parallel arcs in $\mathbb{R}^2$. At each overcrossing/undercrossing arc of $\upsilon$, the $p$ parallel arcs all pass over/under, respectively. Mark $p^2$ new virtual crossings for each virtual crossing of $\upsilon$. Lastly, break the $p$ strands at some point away from the classical and virtual crossings, and insert an oriented $(p,p)$-tangle $\tau$ such that the result is an oriented knot. \emph{We further require that all the crossings in} $\tau$ \emph{be classical}.  See Figure \ref{fig_class_sat}. The resulting virtual knot diagram $\tau(\upsilon)$ will be called a \emph{classical satellite with companion} $\upsilon$.  Let $r(\tau)$ denote the difference in the number of incoming and outgoing strands from the top of $\tau$, in absolute value. 
\newline
\newline
An untwisted satellite with companion $K$ can clearly be represented as a classical satellite of the classical knot $K$. For links in SSF we have the following similar result.

\begin{lemma} \label{lemm_sat_class} Let $J \sqcup K$ be a two component link with $J$ fibered and $K$ in SSF with respect to some fiber $\Sigma_J$. Let $P$ be a pattern. Then $J \sqcup P(K)$ is in SSF with respect to $\Sigma_J$ and there is a (non-unique) $(p,p)$-tangle $\tau$ such that $r(P)=r(\tau)$ and $\kappa([P(K);\Sigma_J]) \leftrightharpoons \tau(\kappa([K;\Sigma_J]))$.
\end{lemma}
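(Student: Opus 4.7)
The plan is to unravel the SSF structure of $K$ along a suitable tubular neighborhood and track what happens geometrically under the satellite operation. First I would write $K$ as a disjoint union of embedded arcs $\alpha_1,\ldots,\alpha_n$ on $\Sigma_J$ together with crossing arcs inside a finite collection of $3$-balls $B_1,\ldots,B_m$ each meeting $\Sigma_J$ in a disc. Along each $\alpha_j$, a bicollar $\alpha_j\times \mathbb{B}^2$ inside $\mathbb{S}^3$ can be chosen so that $\alpha_j\times \mathbb{I}\subset \Sigma_J$; inside each $B_k$ the tubular neighborhood of the two crossing arcs can be shrunk to fit in a slightly enlarged ball $B_k'$ still satisfying $B_k'\cap \Sigma_J\approx \mathbb{B}^2$. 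These pieces glue to a tubular neighborhood $\overline V(K)$ adapted to $\Sigma_J$. The longitude $l$ coming from this bicollar framing may differ from the untwisted longitude by some integer number of full twists, so I would record this discrepancy and plan to absorb it into $\tau$ later.

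Next I would identify $\overline V(K)$ with $\mathbb{S}^1\times \mathbb{B}^2$ by a diffeomorphism matching meridian to meridian and the bicollar longitude to $\mathbb{S}^1\times\{1\}$, and then insert the pattern $P$. Let $p$ be the geometric intersection of $P$ with a meridional disc; then on each arc $\alpha_j$ we see $p$ parallel arcs lying in the bicollar inside $\Sigma_J$, inside each ball $B_k'$ we see $p^2$ crossing pairs still trapped in a ball meeting $\Sigma_J$ in a disc, and at one point along some $\alpha_{j_0}$ we see a short $(p,p)$-tangle containing all of $P$. By moving any full twists that compensate the framing discrepancy into this tangle, we obtain an embedded $(p,p)$-tangle $\tau$ consisting only of classical crossings. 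This exhibits $J\sqcup P(K)$ as being in SSF with respect to $\Sigma_J$, proving the first claim of the lemma.

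To compare the invariant associated virtual knots, recall that $\kappa([P(K);\Sigma_J])$ is read off by projecting the SSF diagram of $P(K)$ to $\Sigma_J$ and stabilizing. The construction above shows that the diagram $[P(K);\Sigma_J]$ is exactly the classical satellite of $[K;\Sigma_J]$ with companion tangle $\tau$: every arc of $[K;\Sigma_J]$ is replaced by $p$ parallel arcs, every classical crossing becomes $p^2$ classical crossings in the same ball, and the virtual crossings that appear after stabilizing are replicated $p^2$-fold by naturality of the parallel pushoff along $\Sigma_J$. Inserting $\tau$ at the chosen site then gives $\tau(\kappa([K;\Sigma_J]))$. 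The equality $r(P)=r(\tau)$ is immediate from the definitions, since both count the algebraic number of strands crossing a fixed meridional disc of the solid torus carrying $P$.

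The main obstacle I expect is the framing bookkeeping in the first two steps: ensuring that the longitude induced by the bicollar agrees with the zero-framed longitude demanded by the untwisted satellite construction, and that any twist discrepancy introduced by the crossings in the balls $B_k$ can indeed be pushed into a single location and realized as full twists of $p$ strands inside $\tau$. Once this is handled, the rest is a direct bookkeeping of how the classical satellite operation on virtual knot diagrams mirrors the geometric satellite operation on SSF diagrams on $\Sigma_J$.
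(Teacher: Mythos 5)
Your proposal is correct and follows essentially the same route as the paper: draw $p$ parallel copies of the SSF arcs (both on $\Sigma_J$ and in the crossing balls), cut the pattern solid torus along a meridional disc to get a $(p,p)$-tangle, and compose it with enough full twists to correct the discrepancy between the surface framing and the zero-framed longitude before inserting it at a point of $[K;\Sigma_J]$ away from the crossings. The framing bookkeeping you flag as the main obstacle is exactly what the paper's twist tangle $\beta$ absorbs (so that $\operatorname{lk}(K,l)=0$), and since full twists do not change the algebraic strand count, $r(P)=r(\tau)$ survives as you claim.
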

\begin{proof} Obtain a $(p,p)$-tangle $\tau'$ by cutting $\overline{V}$ along a meridianal disc intersecting $P$ transversely. For each of the arcs in the SSF of $K$, both those lying in $\Sigma_J$ and in the embedded $3$-balls $B_i$, draw $p$ parallel arcs. Let $\beta$ be a $(p,p)$-tangle representing a sufficient number of full twists of the $p$ strands to satisfy the requirement that $\text{lk}(K,l)=0$, for a longitude $l$ of $K$. Insert a diagram of $\tau=\tau'\cdot \beta$ on $\Sigma_J$ into a portion away from the crossings of $[K;\Sigma_J]$. The resulting link is $J \sqcup P(K)$. By construction of $\tau$,  $P(K)$ is in SSF with respect to $\Sigma_J$, $r(P)=r(\tau')=r(\tau)$, and $\kappa([P(K);\Sigma_J]) \leftrightharpoons \tau(\kappa([K;\Sigma_J]))$.
\end{proof}

\begin{figure}[htb]
\begin{tabular}{|c|} \hline \\
\def\svgwidth{2in}
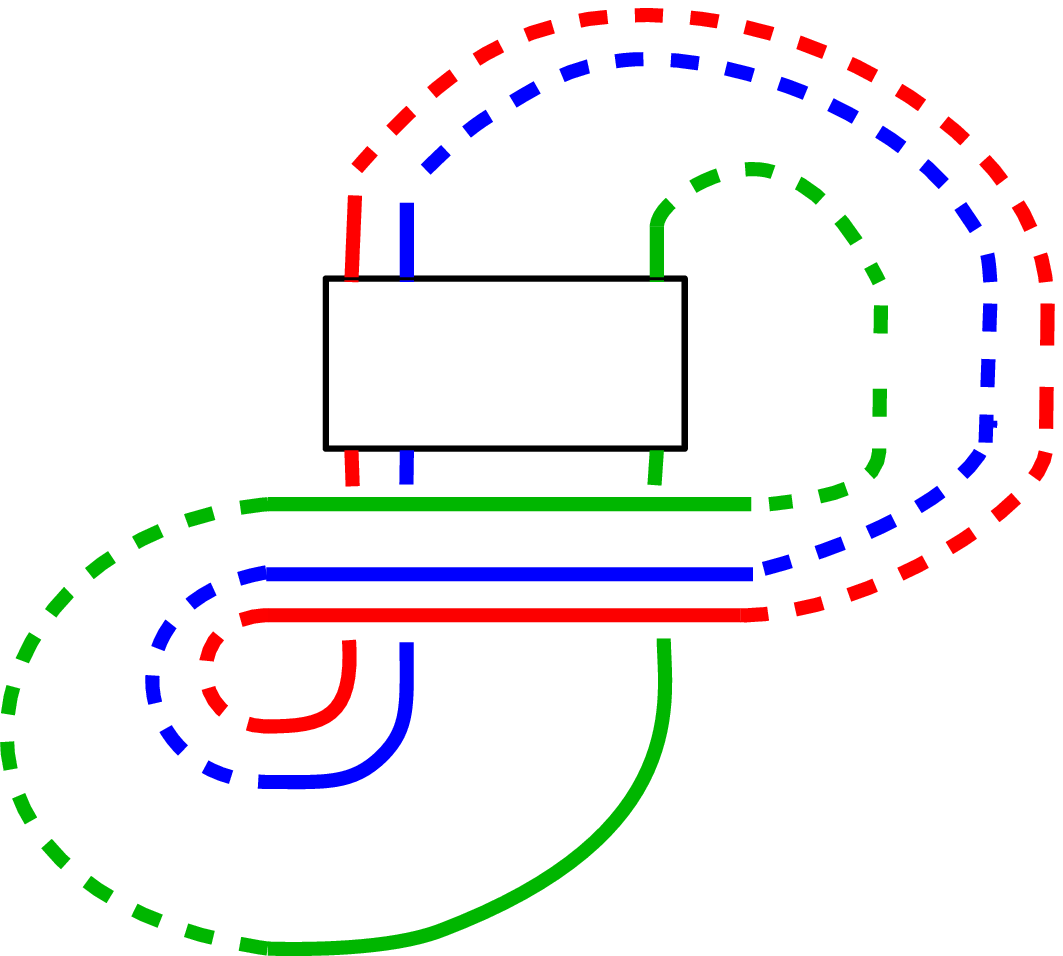 \\ \hline
\end{tabular}
\caption{A schematic of a classical satellite $\tau(\upsilon)$  of a virtual knot $\upsilon$. The $p$ strands are dyed different colors, but all lie one component.} \label{fig_class_sat}
\end{figure}

The following theorem generalizes a result of A. Gibson \cite{gibson_cables} on cables to classical satellites.  It is used in the next section to study injectivity of  satellite operators in sf-concordance.

\begin{theorem} \label{thm_gib_gen} Let $\upsilon$ be a virtual knot diagram and $\tau(\upsilon)$ a classical satellite with companion $\upsilon$. Let $r=r(\tau)$. Then $w_{\tau(\upsilon)}(t)=r^2 w_{\upsilon}(t^r)$.
\end{theorem}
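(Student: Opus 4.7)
The plan is to compute $w_{\tau(\upsilon)}(t)$ directly from the homological formula for the HT polynomial derived in the proof of Theorem~\ref{thm_ht_invar}. First, I would realize $\upsilon$ as an oriented knot $\mathfrak{k}$ in $\Sigma\times\mathbb{R}$ with diagram $D$ on some c.c.o.\ surface $\Sigma$, and realize $\tau(\upsilon)$ on the same $\Sigma$ as a satellite $\tau(\mathfrak{k})\subset V(\mathfrak{k})$ with projected diagram $D^{sat}$. Since the pattern of $\tau$ has winding number $\pm r$ in the solid torus $V(\mathfrak{k})$, we have $[D^{sat}]=q[D]\in H_1(\Sigma;\mathbb{Z})$ for some integer $q$ with $|q|=r$, and the formula becomes $|\text{index}(y)|=|D^{sat}\cdot D^{sat}_y|$ for each classical crossing $y$ of $\tau(\upsilon)$.

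Next, I would partition the classical crossings of $\tau(\upsilon)$ into (i) \emph{tangle crossings} lying inside a disc supporting $\tau$, and (ii) \emph{satellite crossings} $y_{ij}$ appearing at each classical crossing $x$ of $\upsilon$, where strand $i$ of one branch meets strand $j$ of the other. Write $\epsilon_i\in\{\pm 1\}$ for whether strand $i$ is parallel or antiparallel to $\mathfrak{k}$, so that $\sum_i\epsilon_i=q$ and $\text{sign}(y_{ij})=\epsilon_i\epsilon_j\,\text{sign}(x)$. For a tangle crossing $y$, the oriented smoothing reroutes strands only inside $\tau$, leaving the $p$ full companion-copies intact outside; each resulting component is thus a union of complete strands, with class an integer multiple of $[D]$. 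Because the intersection form on $H_1(\Sigma;\mathbb{Z})$ is alternating, $D^{sat}\cdot D^{sat}_y=0$, and tangle crossings contribute nothing.

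The core of the argument is at the satellite crossings. Tracing the oriented smoothing at $y_{ij}$ through the cable, I would show that strands $i$ and $j$ get reconnected in the local pattern of the $D_x$-smoothing of $D$ at $x$ when $\epsilon_i\epsilon_j=+1$, and in the orientation-reversed pattern when $\epsilon_i\epsilon_j=-1$; the remaining $p-2$ strands pass through unchanged. Because the local reconnection contributes (up to sign) the arc that distinguishes $D_x$ from $D_x'$, while the other strands again give integer multiples of $[D]$, the resulting component has class $\pm[D_x]+m[D]$ for some $m\in\mathbb{Z}$. Using $[D]\cdot[D]=0$ yields $|\text{index}(y_{ij})|=|q[D]\cdot[D_x]|=r|\text{index}(x)|$.

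Summing the signed contributions at a fixed $x$,
\[
\sum_{i,j}\text{sign}(y_{ij})\,t^{|\text{index}(y_{ij})|}=\text{sign}(x)\,t^{r|\text{index}(x)|}\sum_{i,j}\epsilon_i\epsilon_j=\Bigl(\sum_i\epsilon_i\Bigr)^{2}\text{sign}(x)\,t^{r|\text{index}(x)|}=r^2\,\text{sign}(x)\,t^{r|\text{index}(x)|},
\]
and summing over all $x$ yields $w_{\tau(\upsilon)}(t)=r^2 w_\upsilon(t^r)$. The main obstacle is the homological bookkeeping at satellite crossings with $\epsilon_i\neq\epsilon_j$: the oriented smoothing of $y_{ij}$ then realizes a disoriented resolution at $x$, and one must verify that the backward-traversed strand segments still combine with the unchanged strands to produce a cycle whose class is congruent to $\pm[D_x]$ modulo $[D]$.
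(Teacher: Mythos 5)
Your proof is correct, but it takes a genuinely different route from the paper's. The paper argues entirely at the level of Gauss diagrams: it computes $\text{index}(y)$ by traversing $\tau(\upsilon)$ from $y$, shows that complete dyed strands and paired crossings in $\tau(x)$ contribute zero in cancelling pairs, and then reduces the remaining count to $\text{index}(y_0)$ by converting all other crossings to virtual ones and invoking the fact that classical diagrams have vanishing indices; the factor $\pm r$ per contribution and the sign sum $\pm r^2$ over $\tau(y_0)$ then give the formula. You instead pass to a surface realization and use the homological formula $|\text{index}(y)|=|D^{sat}\cdot D^{sat}_y|$ already derived in the proof of Theorem~\ref{thm_ht_invar}, so that the scaling factor $r$ drops out of $[D^{sat}]=q[D]$ and bilinearity of the intersection form, and the tangle crossings die because their smoothings are integer multiples of $[D]$. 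Your approach buys a more conceptual explanation of where $r$ and $r^2$ come from and avoids the traversal bookkeeping; the paper's stays purely combinatorial and needs no surface model. The one step you rightly flag as the obstacle --- that $[D^{sat}_{y_{ij}}]\equiv\pm[D_x]\pmod{\mathbb{Z}[D]}$ even when $\epsilon_i\epsilon_j=-1$ --- does hold and can be nailed down cleanly: the multiplicity $n_e(D^{sat}_{y_{ij}})$ is constant on each of the two arcs of $D$ determined by $x$ (it can only change at $x$, since the tangle and the other crossings conserve flow), and a boundary computation at the four germs of the crossing disc shows these two constants differ by exactly $1$ regardless of the orientations $\epsilon_i,\epsilon_j$; hence $[D^{sat}_{y_{ij}}]=[D_x']+m[D]$ and $|[D^{sat}]\cdot[D^{sat}_{y_{ij}}]|=r\,|\text{index}(x)|$. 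With that verification supplied, your argument is complete.
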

\begin{proof} Consider each of the $p$ parallel strands drawn in the construction of $\tau(\upsilon)$ to be dyed a different color, labeled as $c_1,\ldots,c_p$. If $x$ is a classical crossing of $\upsilon$, let $\tau(x)$ denote the set of $p^2$ corresponding crossings in $\tau(\upsilon)$. Thus, every classical crossing of $\tau(\upsilon)$ is either in $\tau(x)$ for some $x$ or in $\tau$. For each crossing $y$ of $\tau(\upsilon)$, we will compute its index by traversing the knot diagram from $y$ and counting contributions of $\text{int}_y(a)$ until the first time we return to $y$. 
\newline
\newline
Note that an entire dyed strand of color $c_i$ is traversed before returning, then the net contribution of those crossings passed is zero. This is clear for crossings of the strands $c_i$ with itself, since the trip passes both over and under the crossing. Now consider a crossing $z_1 \in \tau(x)$ where $c_i$ crosses $c_j$ and $j \ne i$. Observe that there is exactly one other crossing $z_2 \in \tau(x)$ such that $c_i$ crosses $c_j$. It is easy to see that $\text{int}_y(z_1)=-\text{int}_y(z_2)$ regardless of how the strands happen to be oriented by $\tau$. The net change to $\text{index}(y)$ is thus zero.
\newline
\newline
Consider first a classical crossing $y$ in $\tau$. Then the only contribution to its index comes from other crossing in $\tau$. Thus we may replace all crossings in $\tau(\upsilon)$ outside of $\tau$ with virtual crossings without affecting $\text{index}(y)$. This results in a classical knot, hence $\text{index}(y)=0$.
\newline
\newline
Now consider $y \in \tau(y_0)$ where $y_0$ is a classical crossing of $\upsilon$. For all $w \ne y_0$, convert all crossings in $\tau(w)$ to virtual ones. As detour moves do not affect the index, we may assume that $\tau$ and the crossings of $\tau(y_0)$ are positioned as in Figure \ref{fig_class_sat}. This virtual knot is classical, so the total contributions to $\text{index}(y)$ from crossings in $\tau$ and those in $\tau(y_0)$ is zero. Hence, we count only crossings in $\tau(w)$ with $w \ne y_0$.  Each contribution to $\text{index}(y_0)$ in $\upsilon$ counts $ \pm r$ times in $\text{index}(y)$. Hence, $\text{index}(y)=\pm r \cdot \text{index}(y_0)$. Lastly note that the sum of signs of crossings in $\tau(y_0)$ is $r^2$ (resp. $-r^2$) when $y_0$ is signed $\oplus$ (resp. $\ominus$) .
\end{proof}
\subsection{Injectivity of Satellite Operators} A satellite operator $P$ is said to be \emph{injective} if $K_0 \not \asymp K_1$ implies $P(K_0) \not \asymp P(K_1)$ for all knots $K_0,K_1$ in $\mathbb{S}^3$. In \cite{coch_ray}, the injectivity of $P$ having strong winding number $\pm 1$ was established for the topological, exotic, and smooth categories (assuming the smooth Poincar\'{e} Conjecture on $\mathbb{S}^4$).  
\newline
\newline
Consider the satellite operator $J \sqcup K \to J \sqcup P(K)$. We will say that $P$ is \emph{injective in semi-fibered concordance} if $J_0 \sqcup K_0 \not \asymp_{sf} J_1 \sqcup K_1$ implies $J_0 \sqcup P(K_0) \not \asymp_{sf} J_1 \sqcup P(K_1)$.
\newline
\newline
\textbf{Example:} Embed the standard ribbon disc of the square knot in a solid torus so that it is not contained in a $3$-ball. Let $P$ be the pattern corresponding to this square knot. Let $J \sqcup K$ be the link on the left in Figure \ref{fig_sato_levine_1}. By Theorem \ref{thm_ribbon} (2), $J \sqcup P(K) \asymp_{sf} J \sqcup \bigcirc \asymp_{sf} J \sqcup P(\bigcirc)$, where $\bigcirc$ bounds a disc in the complement of $J$. However, we showed that $J \sqcup K \not\asymp_{sf} J \sqcup \bigcirc$ since the associated virtual  knot has non-vanishing HT polynomial. Thus $P$ is not injective on the set of all semi-fibered concordance classes of  two-component links. \hfill $\square$
\newline
\newline
Two links in SSF whose associated virtual knots have different HT polynomials are not semi-fibered concordant. This condition is sufficient to prove injectivity in semi-fibered concordance of non-zero absolute winding number satellite operators applied to such links.

\begin{theorem} For $i=0,1$, let $J_i \sqcup K_i$ be a two component link with $J_i$ fibered, $K_i$ in SSF with respect to some fiber $\Sigma_i$ of $J_i$, and invariant associated virtual knot $\upsilon_i=\kappa([K_i;\Sigma_i])$. Let $P$ be a pattern with $r(P) \ne 0$. If $w_{\upsilon_0}(t) \ne w_{\upsilon_1}(t)$, then $J_0 \sqcup P(K_0) \not \asymp_{sf} J_1 \sqcup P(K_1)$.
\end{theorem}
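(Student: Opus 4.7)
The plan is to argue by contrapositive: assume $J_0 \sqcup P(K_0) \asymp_{sf} J_1 \sqcup P(K_1)$ and deduce that $w_{\upsilon_0}(t) = w_{\upsilon_1}(t)$. The whole argument is a straightforward assembly of the three tools already prepared in the paper, namely the main theorem (Theorem \ref{theorem_sf_invar}), the classical-satellite realization of $P(K)$ on a fiber (Lemma \ref{lemm_sat_class}), and the cabling-type formula for the HT polynomial (Theorem \ref{thm_gib_gen}). Together with the fact that the HT polynomial is itself a concordance invariant of virtual knots (Theorem \ref{thm_ht_invar}), these give the result essentially for free.

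First, I would invoke Lemma \ref{lemm_sat_class} for each $i = 0,1$ to conclude that $J_i \sqcup P(K_i)$ is in SSF with respect to the same fiber $\Sigma_i$ of $J_i$, and that its invariant associated virtual knot can be written as $\tau_i(\upsilon_i)$ for some $(p,p)$-tangle $\tau_i$ satisfying $r(\tau_i) = r(P)$. Write $r := r(P) \ne 0$. Next, the assumption $J_0 \sqcup P(K_0) \asymp_{sf} J_1 \sqcup P(K_1)$ combined with Theorem \ref{theorem_sf_invar} yields a concordance of virtual knots $\tau_0(\upsilon_0) \asymp \tau_1(\upsilon_1)$. Applying Theorem \ref{thm_ht_invar} to this concordance gives
\[
w_{\tau_0(\upsilon_0)}(t) \;=\; w_{\tau_1(\upsilon_1)}(t).
\]

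Now apply Theorem \ref{thm_gib_gen} to each side. Since $r(\tau_i) = r$, the identity reads $w_{\tau_i(\upsilon_i)}(t) = r^2 \, w_{\upsilon_i}(t^r)$, so the equality above becomes
\[
r^2 \, w_{\upsilon_0}(t^r) \;=\; r^2 \, w_{\upsilon_1}(t^r).
\]
Because $r \neq 0$ we have $r^2 \neq 0$, so $w_{\upsilon_0}(t^r) = w_{\upsilon_1}(t^r)$ as polynomials. The substitution $t \mapsto t^r$ is injective on $\mathbb{Z}[t]$ (it sends a nonzero polynomial of degree $d$ to one of degree $rd$), so we conclude $w_{\upsilon_0}(t) = w_{\upsilon_1}(t)$, contradicting the hypothesis. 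This proves the theorem.

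There is essentially no hard step here; the only thing one must be a touch careful about is the non-uniqueness of the tangle $\tau_i$ produced by Lemma \ref{lemm_sat_class} (its twisting $\beta$-factor depends on choices), but Theorem \ref{thm_gib_gen} depends on $\tau_i$ only through $r(\tau_i) = r(P)$, so the choice is immaterial. In particular one does not need to compare $\tau_0$ and $\tau_1$ directly; the HT polynomial washes out the ambiguity.
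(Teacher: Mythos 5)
Your proof is correct and follows essentially the same route as the paper's: Lemma \ref{lemm_sat_class} to realize $J_i \sqcup P(K_i)$ in SSF with associated virtual knot $\tau_i(\upsilon_i)$, then Theorems \ref{theorem_sf_invar} and \ref{thm_ht_invar} to equate the HT polynomials of the satellites, and Theorem \ref{thm_gib_gen} to descend to $w_{\upsilon_0}(t^r)=w_{\upsilon_1}(t^r)$. Your added remarks on the injectivity of $t\mapsto t^r$ and on the irrelevance of the twisting factor $\beta_i$ just make explicit what the paper leaves implicit.
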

\begin{proof} Let $\tau'$ be a $(p,p)$-tangle obtained by cutting the solid torus containing $P$ along a meridianal disc. By Lemma \ref{lemm_sat_class} and its proof, for $i=0,1$ there is a $(p,p)$-tangle $\tau_i=\tau'\cdot \beta_i$ such that $\kappa([P(K_i);\Sigma_i]) \leftrightharpoons \tau_i(\upsilon_i)$, where $\beta_i$ is some appropriate number of full twists for $K_i$. Suppose that $J \sqcup P(K_0) \asymp_{sf} J \sqcup P(K_1)$. By Theorems \ref{theorem_sf_invar} and \ref{thm_ht_invar}, $w_{\tau_0(\upsilon_0)}(t)=w_{\tau_1(\upsilon_1)}(t)$. Since $r(P) \ne 0$, Theorem \ref{thm_gib_gen} implies that $w_{\upsilon_0}(t^{r(P)})=w_{\upsilon_1}(t^{r(P)})$.  This is a contradiction.
\end{proof}

\section{Concordance and Cables of Knots in 3-manifolds}
\label{sec_three}
\subsection{Concordance in $3$-Manifolds} Virtual covers can also be used to study concordance of knots in closed oriented $3$-manifolds $N$. In this section we sketch some applications and examples. We will say that oriented knots $K_0^N$ and $K_1^N$ are concordant in $N$ if there is a properly embedded annulus $A$ in $N \times \mathbb{I}$ such that for $i=0,1$, $A \cap (N \times \{i\})=(-1)^i K_i$. As usual, we write $K_0^N \asymp K_1^N$ to denote concordance in $N$. 

\begin{theorem}\label{thm_conc_in_man} Let $(\mathfrak{k}_0^{\Sigma \times \mathbb{R}},\Pi,K_0^N),(\mathfrak{k}_1^{\Sigma \times \mathbb{R}},\Pi,K_1^N)$ be virtual covers with invariant associated virtual knots $\upsilon_0,\upsilon_1$, respectively. If $K_0^N \asymp K_1^N$, then $\upsilon_0 \asymp \upsilon_1$.
\end{theorem}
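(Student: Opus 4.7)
The plan is to imitate the proof of Theorem \ref{theorem_sf_invar}: lift the concordance annulus $A \subset N \times \mathbb{I}$ through a covering projection to produce a concordance of lifts in a thickened surface, and then invoke \cite{CKS}, Lemma 4.6, to pass from concordance in thickened surfaces to virtual knot concordance.

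First I would form the product covering projection $\Pi \times \mathrm{id}_{\mathbb{I}}: \Sigma \times \mathbb{R} \times \mathbb{I} \to N \times \mathbb{I}$; since $\Pi$ is a regular o.p. covering projection, so is $\Pi \times \mathrm{id}_{\mathbb{I}}$. The boundary curve $K_0^N \subset N \times \{0\}$ already lifts to $\mathfrak{k}_0^{\Sigma \times \mathbb{R}} \subset \Sigma \times \mathbb{R} \times \{0\}$, so by the homotopy lifting theorem for covering projections the annulus $A$ lifts to a smooth map $\mathfrak{a}: \mathbb{S}^1 \times \mathbb{I} \to \Sigma \times \mathbb{R} \times \mathbb{I}$ with $\mathfrak{a}(\mathbb{S}^1 \times \{0\})$ identified with $\mathfrak{k}_0$. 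Since $\Pi \times \mathrm{id}_{\mathbb{I}}$ is a local diffeomorphism and $A$ is an embedding, $\mathfrak{a}$ is an injective immersion; compactness of $\mathbb{S}^1 \times \mathbb{I}$ then promotes $\mathfrak{a}$ to a properly embedded oriented annulus.

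Set $\mathfrak{l}^{\Sigma \times \mathbb{R}} = \mathfrak{a}(\mathbb{S}^1 \times \{1\})$, oriented so that it is a lift of $K_1^N$ by $\Pi$ (after accounting for the $(-1)^i$ sign convention in the concordance definition). Then $(\mathfrak{l}^{\Sigma \times \mathbb{R}}, \Pi, K_1^N)$ is another virtual cover of $K_1^N$, so the invariance hypothesis forces $\kappa(\mathfrak{l}^{\Sigma \times \mathbb{R}}) \leftrightharpoons \upsilon_1$. Taking $M = \Sigma \times \mathbb{I}$ with $\Sigma \sqcup -\Sigma \hookrightarrow \partial M$, the lifted annulus $\mathfrak{a}$ realizes a Turaev concordance of $\mathfrak{k}_0^{\Sigma \times \mathbb{R}}$ with $\mathfrak{l}^{\Sigma \times \mathbb{R}}$ as knots in thickened surfaces. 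Applying \cite{CKS}, Lemma 4.6, then yields $\upsilon_0 \leftrightharpoons \kappa(\mathfrak{k}_0) \asymp \kappa(\mathfrak{l}) \leftrightharpoons \upsilon_1$, as desired.

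The only real technical point is checking that $\mathfrak{a}$ is genuinely a properly embedded annulus rather than merely an immersed one. This is automatic: a compact injective immersion to a manifold is a proper embedding, and injectivity follows immediately from $A = (\Pi \times \mathrm{id}_{\mathbb{I}}) \circ \mathfrak{a}$ together with injectivity of $A$. Everything else is a direct repackaging of ingredients already in the excerpt, and the argument is really a simplified version of the proof of Theorem \ref{theorem_sf_invar} in which the ambient $\mathbb{S}^3 \times \mathbb{I}$ is replaced by $N \times \mathbb{I}$ and the auxiliary fibered annulus $A_J$ is absent.
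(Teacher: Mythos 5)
Your proposal is correct and takes essentially the same approach the paper intends: the paper's own ``proof'' of Theorem \ref{thm_conc_in_man} merely states that it is similar to the proof of Theorem \ref{theorem_sf_invar} and leaves it as an exercise. Your argument --- lifting the concordance annulus through $\Pi \times \mathrm{id}_{\mathbb{I}}$ to a properly embedded annulus in $(\Sigma \times \mathbb{I}) \times \mathbb{R}$, using the invariance hypothesis to identify the lifted boundary with $\upsilon_1$, and invoking Lemma 4.6 of \cite{CKS} --- is precisely that adaptation, with $N \times \mathbb{I}$ playing the role of the complement of $A_J$ and $\Sigma \times \mathbb{I}$ the role of the fiber $H$.
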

\begin{proof} It is similar to the proof of Theorem \ref{theorem_sf_invar}, and hence we leave it as an exercise.
\end{proof}

A $3$-manifold $N$ is said to be \emph{fibered} if it can be represented as a mapping torus $\Sigma\times \mathbb{I}/\psi$, where $\psi:\Sigma \to \Sigma$ is an orientation preserving diffeomorphism. As such it is a fiber bundle over $\mathbb{S}^1$ with fiber $\Sigma$. We will assume $\Sigma$ is c.c.o., so that the covering space $\Pi:\Sigma \times \mathbb{R} \to N$ defined by the mapping torus is regular and orientation preserving.  Similar to knots in fibered knot complements, we can define \emph{special surface form} (SSF, again), where a knot can be decomposed into arcs in a fixed fiber and ``crossings'' in small balls, each intersecting $\Sigma$ is a disc. 
\newline
\newline
When comparing knots in $N$, we will always state the explicit hypothesis that both knots are in SSF with respect to the \emph{same fiber} of a given fibration/mapping torus. Under this condition, as in \cite{cm_fiber}, it follows that a knot in SSF has an invariant associated virtual knot and equivalent knots in SSF with resepect to the same fiber have equivalent invariant associated virtual knots.
\newline
\newline
\textbf{Example:} A handlebody decomposition (from Exercise 8.2.4 of \cite{kirby_calc}) of a $4$-manifold whose boundary is a fibered $3$-manifold $N$, is given in the center picture in Figures \ref{fig_example_1_1} and \ref{fig_example_2_1}. Here $n$ is any integer. A torus fiber can be seen as follows. Let $D$ be the visible disc contained in the plane of the paper, bounded by the four black $0$-framed arcs and arcs intersecting the boundaries of the attaching regions of the $1$-handles. In the $4$-manifold, this is a torus with a disc removed. The $0$-framed arcs, viewed as a $3$-manifold Dehn surgery, attach a disc along the boundary of the removed disc to create a fiber $\Sigma \approx \mathbb{S}^1 \times \mathbb{S}^1$. The $-1/n$-framed arc in the diagram corresponds to the orientation preserving diffeomorphism in the mapping torus $\psi^n:\Sigma \to \Sigma$, where $\psi$ is a Dehn twist (see \cite{kirby_calc}).
\newline
\newline
Let $K_0^N$ be the red knot indicated in the middle of Figure \ref{fig_example_1_1}. Let $K_1^N$ be the red knot indicated in the middle of Figure \ref{fig_example_2_1}. The left hand side of Figure \ref{fig_example_1_1} (resp., Figure \ref{fig_example_2_1}) shows $K_0^N$ (resp., $K_1^N$) in the more conventional from of a \emph{mixed link diagram} \cite{lamb_rourke}. Both $K_0^N$ and $K_1^N$ are in SSF with respect to $\Sigma$. The invariant associated virtual knots $\upsilon_0,\upsilon_1$ are given on the right hand side of their respective figures. Since $w_{\upsilon_0}(t) =2t \ne 0 =w_{\upsilon_1}(t)$, it follows that $K_0^N \not \asymp K_1^N$. Note that these knots represent the same homology class when considered as curves on $\Sigma$. By Theorem \ref{thm_gen_ras}, the slice genus of both $\upsilon_0$ and $\upsilon_1$ is 1, so neither $K_0^N$ nor $K_1^N$ is concordant to the boundary of an embedded disk in $N$.  \hfill $\square$

\begin{figure}[htb]
\begin{tabular}{|ccc|} \hline & & \\
\begin{tabular}{c}
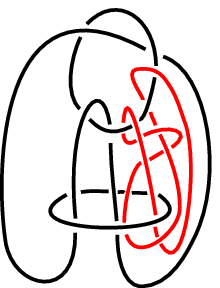
\end{tabular}
& 
\begin{tabular}{c}
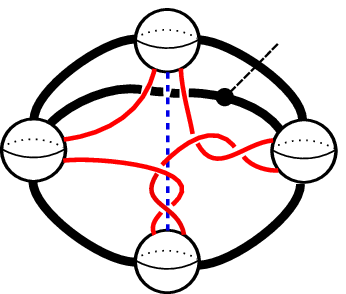 
\end{tabular}
& 
\begin{tabular}{c}
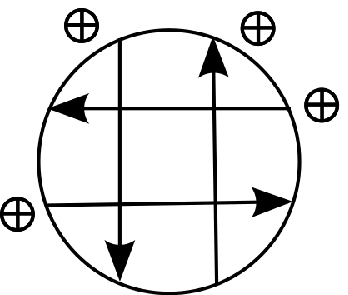 
\end{tabular}
\\ & & \\ \hline
\end{tabular}
\caption{A knot $K_0^N$ in a fibered $3$-manifold $N$ (left). A Gauss diagram of the invariant associated virtual knot $\upsilon_0$ to $K_0^N$ (right).}\label{fig_example_1_1}
\end{figure}

\begin{figure}[htb]
\begin{tabular}{|ccc|} \hline & & \\
\begin{tabular}{c}
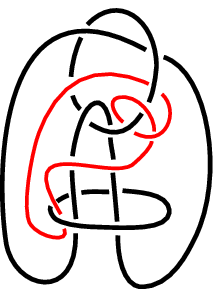
\end{tabular}
& 
\begin{tabular}{c}
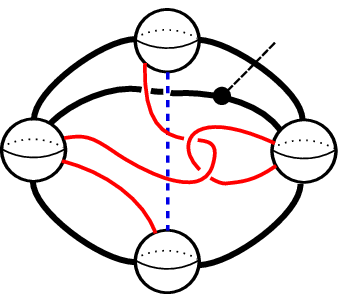 
\end{tabular}
& 
\begin{tabular}{c}
\def\svgwidth{1.35in}
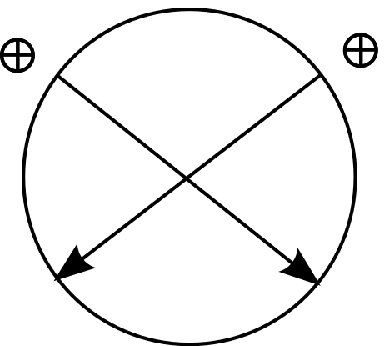 
\end{tabular}
\\ & & \\ \hline
\end{tabular}
\caption{A knot $K_1^N$ in a fibered $3$-manifold $N$ (left).  A Gauss diagram of the invariant associated virtual knot  $\upsilon_1$ to $K_1^N$ (right).}\label{fig_example_2_1}
\end{figure}

\subsection{Cables and Irreducible $3$-Manifolds} A $3$-manifold is said to be \emph{irreducible} if every embedded $\mathbb{S}^2$ bounds an embedded $\mathbb{B}^3$. Otherwise, it is said to be \emph{reducible}. A link $L$ in $\mathbb{S}^3$ is split if and only if $\overline{\mathbb{S}^3\backslash V(L)}$ is reducible. If $L=J \sqcup K$ is a two component split link with $J$ fibered and $K$ in SSF with respect to a fiber $\Sigma_J$, then $K$ is the invariant associated virtual knot \cite{vc_1}. Thus $\overline{\mathbb{S}^3\backslash V(L)}$ is irreducible whenever the invariant associated virtual knot is non-classical. Here  we apply this idea to Dehn surgeries on knots in fibered $3$-manifolds that yield reducible $3$-manifolds.
\newline
\newline
Let $N$ be a closed oriented $3$-manifold and $K^N$ a knot with fixed trivialization of the normal bundle of $K$. Let $P$ be a knot in $\overline{V}=\mathbb{B}^2 \times \mathbb{S}^1$ not contained in an embedded $3$-ball in $\overline{V}$. As in the case of $N=\mathbb{S}^3$, we may define a satellite operator via a longitude $l$ of a closed tubular neighborhood $\overline{V}(K)$ and an o.p. diffeomorphism $f:\overline{V} \to \overline{V}(K)$ identifying $f(\mathbb{S}^1 \times \{0\})$ with $K$, a meridian of $\overline{V}$ to a meridian of $\overline{V}(K)$, and the longitude $\mathbb{S}^1 \times \{1\}$ to $l$. Set $P(K,l)^N:=f(P)^N$.
\newline
\newline
Let $S_{1/2}$ be the circle of radius $1/2$ centered at $0$ in $\mathbb{B}^2$. If $P$ as above is equivalent in $\overline{V}$ to a non-trivial $(p,q)$ torus knot on the torus $S_{1/2} \times \mathbb{S}^1$, then $P(K,l)^N$ is said to be \emph{cabled}. 

\begin{theorem} Let $K^N$ be a knot in a fibered $3$-manifold in SSF with respect to a fiber $\Sigma$ and $\upsilon$ the invariant associated virtual knot. If $\upsilon$ is non-classical and there is a non-trivial Dehn surgery on $K$ that yields a reducible $3$-manifold, then $K$ is cabled in $N$.
\end{theorem}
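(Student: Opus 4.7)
The plan is to reduce the statement to a cabling theorem of Scharlemann type: if $K$ is a knot in a closed orientable $3$-manifold $M$ whose exterior $M \setminus V(K)$ is irreducible, and some non-trivial Dehn surgery on $K$ yields a reducible manifold, then $K$ is cabled in $M$. The task therefore reduces to verifying that $N \setminus V(K)$ is irreducible, and the whole argument will be driven by pulling $2$-spheres up to the cover $\Pi:\Sigma \times \mathbb{R} \to N$.

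I would first use non-classicality of $\upsilon$ to extract two geometric consequences. Every knot in $\mathbb{S}^2 \times \mathbb{R}$ lies in a $3$-ball, so if $\Sigma \approx \mathbb{S}^2$ then $\upsilon$ would be classical; hence $\Sigma$ has positive genus, $\Sigma \times \mathbb{R}$ is aspherical, and in particular irreducible. Similarly, no lift $\mathfrak{k}$ of $K$ by $\Pi$ can lie in a $3$-ball of $\Sigma \times \mathbb{R}$, since such a lift would project to a diagram supported in a disc on $\Sigma$, forcing $\kappa(\mathfrak{k}) = \upsilon$ to be classical.

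Next I would show that $(\Sigma \times \mathbb{R}) \setminus \Pi^{-1}(V(K))$ is irreducible. The preimage $\Pi^{-1}(V(K))$ is a disjoint union of solid tori $V(\mathfrak{k}_n)$, one for each translate of a chosen lift under the deck group. Given an embedded sphere $\tilde S$ in the exterior, irreducibility of $\Sigma \times \mathbb{R}$ provides a $3$-ball $\tilde B \subset \Sigma \times \mathbb{R}$ with $\partial \tilde B = \tilde S$. Compactness of $\tilde B$ and connectedness of each $V(\mathfrak{k}_n)$ force each such neighborhood to lie entirely inside or entirely outside $\tilde B$; the ``inside'' case is excluded by the previous paragraph, so $\tilde B$ is disjoint from $\Pi^{-1}(V(K))$, and $\tilde S$ is inessential. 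Since covering maps preserve $\pi_2$, this gives $\pi_2(N \setminus V(K)) \cong \pi_2((\Sigma \times \mathbb{R}) \setminus \Pi^{-1}(V(K))) = 0$, and the Sphere Theorem then yields irreducibility of $N \setminus V(K)$. Applying the cabling theorem finishes the argument.

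The main obstacle will be pinning down the precise form of Scharlemann's cabling theorem suited to the ambient manifold here, since the classical statements are phrased for knots in $\mathbb{S}^3$. Depending on the version invoked, one may additionally need to verify that $N$ itself is irreducible, which follows by the same $\pi_2$ argument from $\pi_2(N) = \pi_2(\Sigma \times \mathbb{R}) = 0$, and perhaps that $N \setminus V(K)$ is boundary-irreducible, which should follow from the SSF hypothesis together with the non-triviality of the meridian slope.
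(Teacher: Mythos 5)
There is a genuine gap, and it sits exactly where you locate your ``main obstacle.'' The result you want to quote --- \emph{if $K$ has irreducible exterior and some non-trivial Dehn surgery on $K$ yields a reducible manifold, then $K$ is cabled} --- is not a theorem. In $\mathbb{S}^3$ it is precisely the Cabling Conjecture of Gonz\'alez-Acu\~na and Short, which is open; Scharlemann's actual results in this direction (e.g.\ that reducible surgeries on band-sums or on genus-one knots behave as predicted) are far weaker, and no version of the full statement is known for general closed $3$-manifolds. So your reduction to ``$N \setminus V(K)$ is irreducible'' does not connect to any available endgame, and the covering-space work you do to establish that irreducibility (which is fine in itself, modulo the standard sphere-theorem bookkeeping) is not what the proof needs. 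Tellingly, your argument never uses the fibration of $N$ at the decisive step, whereas the fibered structure is essential to the truth of the theorem.

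What the paper actually invokes is the Scharlemann--Thompson classification for knots in fibered $3$-manifolds (their work on surgery on knots in surface bundles): if a non-trivial surgery on $K^N$ yields a reducible manifold, then either (1) $p(K)$ has non-zero degree for the bundle projection $p:N \to \mathbb{S}^1$, (2) $K$ lies in an embedded $3$-ball, (3) $K$ is cabled, or (4) $K$ is isotopic to a simple closed curve in a fiber. The hypotheses then kill three of the four cases: $K$ lifts to $\Sigma \times \mathbb{R}$, so $p(K)$ has degree zero; and in cases (2) and (4) the invariant associated virtual knot would be classical (indeed trivial in case (4)), contradicting the assumption on $\upsilon$. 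Note that case (4) is exactly the kind of alternative your framework cannot see: an essential simple closed curve in a fiber has irreducible exterior and is not cabled, so mere irreducibility of the exterior cannot force conclusion (3). If you want to repair your write-up, replace the appeal to a ``Scharlemann-type cabling theorem'' with the Scharlemann--Thompson four-way alternative and rule out the spurious cases as above; your observations that $\upsilon$ non-classical forces $\Sigma \not\approx \mathbb{S}^2$ and forbids $K$ from lying in a ball then become exactly the inputs needed for cases (2) and (4).
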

\begin{proof} Let $p:N \to \mathbb{S}^1$ denote the fiber bundle projection. Scharlemann and Thompson proved \cite{ST_surf} that the reducibility condition implies either: (1) $p(K)$ has non-zero degree, (2) $K$ is contained in an embedded $3$-ball, (3) $K$ is cabled in $N$, or (4) $K$ is a simple closed curve in a fiber. Since $K$ lifts to a knot in $\Sigma \times \mathbb{R}$, option (1) is impossible. If $K$ is contained in an embedded $3$-ball, then $\upsilon$ must be classical. If $K$ is a simple closed curve in a fiber, its invariant associated virtual knot is equivalent to the trivial knot. Thus the hypotheses on $\upsilon$ eliminate all options but (3).
\end{proof}

We remark that the invariant associated virtual knot can be used to eliminate some types of cabling of $K^N$. Suppose for example that $K^N$ is cabled so that the companion $C^N$ is in SSF with respect to a fiber. Then there is a virtual cover $(\mathfrak{k}^{\Sigma \times \mathbb{R}},\Pi,K^N)$ such that $\mathfrak{k}^{\Sigma \times \mathbb{R}}$ is cabled with companion $\mathfrak{c}^{\Sigma \times \mathbb{R}}$, where $(\mathfrak{c}^{\Sigma \times \mathbb{R}},\Pi,C^N)$ is a virtual cover. By Theorem \ref{thm_gib_gen}, the HT polynomial of the invariant associated virtual knot $\upsilon$ of $K^N$ depends only on $\kappa(\mathfrak{c}^{\Sigma \times \mathbb{R}})$  and $r(P)$, where $P$ is the torus knot pattern of $K^N$. Thus $w_{\upsilon}(t)$ can exclude such $C^N$. 

\subsection{Acknowledgments} We would like to thank the the organizers and participants of Mathematisches Forschungsinstut Oberwolfach Workshop 1422a for a most enlightening and productive meeting in Summer 2014. Portions of this work were completed while the first named author was on sabbatical from Monmouth University in Spring 2015. Lastly we would like to express our deep gratitude to R. Todd for many helpful conversations.

\bibliographystyle{plain}
\bibliography{bib_cobordisms}
\end{document}